\documentclass[11pt]{amsart}
\usepackage{fullpage}

\usepackage{amsmath}
\usepackage{amssymb}
\usepackage{amsfonts}
\usepackage{graphicx}
\usepackage{amsthm}
\usepackage{enumerate}
\usepackage{lscape}
\usepackage{dsfont}
\usepackage{color}
\usepackage{mathtools}
\usepackage[dvipsnames]{xcolor}
\usepackage{hyperref}

\newcounter{example}
\newenvironment{example}[1][]{\refstepcounter{example}\par\medskip
   \noindent \textbf{Example~\theexample. (#1)} \rmfamily}{\medskip}    

\usepackage[utf8]{inputenc}

\newcommand{\de}{\partial}          
\newcommand{\deb}{\overline\partial}  

\newcommand{\C}{\mathds C}
\newcommand{\R}{\mathds R}  

\newcommand{\rank}{\operatorname{rank}}

\newcommand{\tr}{\operatorname{tr}}

\newcommand{\ov}[1]{\overline{#1}}
\newcommand{\wi}[1]{\widetilde{#1}}

\newcommand{\f}{\rightarrow}
\newcommand{\Aut}{\operatorname{Aut}}

\newcommand{\K}{K\"ahler}

\newcommand{\lmb}{\lambda}

\newcommand{\nax}[1]{\left\|{#1}\right\|_{\max}}

\newcommand{\di}{{\operatorname{d}}}

\newcommand{\CH}{Cartan--Hartogs\ }
\newcommand{\W}{\Omega}
\newcommand{\SW}{{\W^*}}
\newcommand{\w}{\omega}
\newcommand{\sw}{\omega^*}

\newcommand{\Ns}{N^\mu_{\Omega^*}}

\newcommand{\midd}{\big |}

\newcommand{\midddd}{\bigg |}

\usepackage{apptools}

\newtheorem{theorem}{Theorem}

\newtheorem{lemma}{Lemma}
\newtheorem{prop}{Proposition}

\newtheorem{remark}{Remark}

\newtheorem{question}{Q}

\makeatletter
\@namedef{subjclassname@2020}{%
  \textup{2020} Mathematics Subject Classification}
\makeatother


\theoremstyle{definition}

%
%
%
%
%
\makeatletter
\def\@tocline#1#2#3#4#5#6#7{\relax
  \ifnum #1>\c@tocdepth 
  \else
    \par \addpenalty\@secpenalty\addvspace{#2}%
    \begingroup \hyphenpenalty\@M
    \@ifempty{#4}{%
      \@tempdima\csname r@tocindent\number#1\endcsname\relax
    }{%
      \@tempdima#4\relax
    }%
    \parindent\z@ \leftskip#3\relax \advance\leftskip\@tempdima\relax
    \rightskip\@pnumwidth plus4em \parfillskip-\@pnumwidth
    #5\leavevmode\hskip-\@tempdima
      \ifcase #1
       \or\or \hskip 1em \or \hskip 2em \else \hskip 3em \fi%
      #6\nobreak\relax
    \hfill\hbox to\@pnumwidth{\@tocpagenum{#7}}\par
    \nobreak
    \endgroup
  \fi}
\makeatother
\makeatletter

\numberwithin{equation}{section}


\def\subsubsection{\@startsection{subsubsection}{3}%
\z@{.5\linespacing\@plus.7\linespacing}{-.5em}%
{\normalfont\bfseries}}
\makeatother

\setcounter{tocdepth}{2}
\setcounter{secnumdepth}{4}

\sloppy
\linespread{1.2}
\title{Symplectic geometry of Cartan--Hartogs domains}

\author{Roberto Mossa}
\address{(Roberto Mossa) Departamento de Matemática \\
 Instituto de Matemática e Estatística \\
Universidade de São Paulo (Brazil)}
        \email{robertom@ime.usp.br}
\author{Michela Zedda}
\address{(Michela Zedda) Dipartimento di Scienze Matematiche, Fisiche e Informatiche \\
         Universit\`a di Parma (Italy)}
\email{michela.zedda@unipr.it}
\date{\today}
\subjclass[2020]{53D05, 32M15}
\keywords{Cartan-Hartogs domains; Darboux coordinates; symplectic duality; symplectic capacity}

\thanks{
The first-named author was supported by a grant from Fapesp (2018/08971-9), the second-named author has been financially supported by the Programme “FIL-Quota Incentivante” of University of Parma, co-sponsored by Fondazione Cariparma.} 

\begin{document}

\maketitle

\begin{abstract}
This paper studies the geometry of Cartan--Hartogs domains from the symplectic point of view. Inspired by duality between compact and noncompact Hermitian symmetric spaces, we construct a dual counterpart of Cartan--Hartogs domains and give explicit expression of global Darboux coordinates for both Cartan--Hartogs and their dual. Further, we compute their symplectic capacity and show that a Cartan--Hartogs admits a symplectic duality if and only if it reduces to be a complex hyperbolic space.  
\end{abstract}
\tableofcontents

\section{Introduction and statement of the results}

Studying the symplectic geometry of a domain $X\subset\C^k$ equipped with a real analytic \K\ metric $\w=\frac{i}{2}\de\deb\phi$, the following questions naturally arise:
\begin{question}\label{quno}
There exists global symplectic coordinates for $X$?
\end{question}
\begin{question}\label{qdue}
Is the dual domain $X^*$ of $X$ a well-defined K\"ahler manifold?
\end{question}
When Q \ref{qdue} has a positive answer we also have: 
\begin{question}\label{qquattro}
What can we say about the symplectic capacity of $X$ and $X^*$?
\end{question}
\begin{question}\label{qtre}
Is there a symplectic duality between $X$ and $X^*$?
\end{question}
Recall that the existence of local symplectic coordinates is guaranteed by the celebrated Darboux Theorem, but in general the answer to Q \ref{quno} is negative, as shown by Gromov's exotic symplectic structures on $\mathds R^{2k}$ \cite{gromovexotic} (see also \cite{bates} for an explicit example of a symplectic manifold diffeomorphic but not symplectomorphic to $\mathds R^{4}$).

The concept of duality in Q \ref{qdue} and Q \ref{qtre} is inspired by the natural duality between compact and noncompact hermitian symmetric spaces and can be expressed as follows. 
The potential $\phi$ can be expanded as a power series of the variables $z=\left(z_{1}, \ldots, z_{k}\right)$ and $\bar{z}=\left(\bar{z}_{1}, \ldots, \bar{z}_{k}\right),$ denoted by $\phi(z,\bar{z})$,  where $z$ is the restriction to $X$ of the Euclidean coordinates of $\C^{k}$. By the change of variables $\bar{z} \mapsto-\bar{z}$ in this power series one gets a new power series denoted by $\phi(z,-\bar{z}).$ We say (according to \cite{lmbdual}) that a symplectic manifold $\left(X ^* , \w^* \right)$ is the \emph{symplectic dual} of $\left(X  , \w \right)$ if $\w^*$ has a \K\ potential $\phi^*$ such that the power series $\phi^*(z,\bar{z})$ associated to $\phi^*$ formally satisfies:
$$ 
\phi^*(z,\bar{z})=-\phi(z,-\bar{z}).
$$
We say that a smooth map $\psi\!:X\rightarrow  X^*$  is a \emph{symplectic duality} if it satisfies:
$$
\psi^*\omega_0=\omega \quad \text{ and } \quad \psi^*\omega^*=\omega_0,
$$
where we denote by $\w_0 = \frac{i}{2} \sum_{j=1}^k dz_j\wedge d\ov z_j$ the restriction of the flat form of $\C^k$ to $X$ and $X^*$.

Symplectic capacities are a class of symplectic invariants, which generalizes the concept of Gromov width, defined by I. Ekeland and H. Hofer \cite{ekelandhoferI,ekelandhoferII} for domains in $\mathds R^{2n}$ and generalized by H. Hofer and E. Zehnder to symplectic manifolds  \cite{HOFERZEHNDER90} (we refer the reader to Section \ref{capacitysection} for definitions and to \cite{hoferzhenerbook} and references therein for further details). Symplectic capacities naturally represent an obstruction for the existence of a symplectic embedding, as they generalize the concept of Gromov width introduced in \cite{GROMOV85}, which gives a measure of the largest ball that can be symplectically embedded inside a symplectic manifold. Their importance arises in the celebrated Gromov's nonsqueezing Theorem, according to which a symplectic embedding of a ball into a cylinder is possible if only if the ray of the ball is less or equal the cylinder's one. Computations and estimates of the Gromov width and the Hofer--Zehnder capacity can be found e.g. in \cite{BIRAN99,caviedes,GROMOV85,JIANG00,GWgrass,hwasuh,LAMCSC11,LU06,MCDUFF94,SCHLENK05,GWcoadjoint}.

%

All the fourth questions find a positive answer when $X$ is a Hermitian symmetric space of noncompact type $\Omega$. In particular in \cite{loiscala}, global symplectic coordinates that realize a symplectic duality are given, while in \cite{lmz} is computed the symplectic capacity of $\Omega$, the Gromov width of the dual  $\Omega^*$ and also sharp estimations of the Hofer-Zehnder capacity of $\Omega^*$, extending G. Lu's results  in \cite{LU06} valid for Grassmanians.
It is then natural to investigate if these results can be extended to a class of bounded domains on $\mathds C^{n+1}$ called {\em Cartan--Hartogs domains}, which are Hartogs domains based on hermitian symmetric spaces of noncompact type. The geometry of these domains turned out to be interesting from several points of view, see e.g. \cite{fengtu,articwall,balancedch,berezinCH} and references therein (see also \cite{Ishi} for Hartogs domains constructed over bounded homogeneous domains). More precisely, Cartan-Hartogs domains are a $1$-parameter family of noncompact nonhomogeneous domains of $\mathds C^{n+1}$, given by:
\begin{equation}\label{defCHdom}\begin{split} 
 M_{\Omega,\mu}:=\{(z,w)\in \Omega\times \C \mid |w|^2<N^\mu_{\W}(z, \ov z)\},
\end{split}\end{equation}
where $\Omega\subset\mathds C^n$ is a bounded symmetric domain, $N^\mu_{\W}(z,\ov z)$ is its generic norm and  $\mu>0$ is a positive real parameter. We endow $M_{\W,\mu}$ with its Kobayashi \K\ form $\omega_{\W,\mu}$  whose global K\"ahler potential reads:
\begin{equation}\label{phicartan}
\varphi_{\W,\mu}(z,w;\bar z,\bar w)=-\log(N^\mu_{\W}(z,\ov z)-|w|^2).
\end{equation}
Our first result answer positively to Q \ref{quno} when $X$ is a Cartan--Hartogs domain:
\begin{theorem}\label{thmmain} 
Let $M_{\W,\mu}$ be an $(n+1)$-dimensional  Cartan-Hartogs domain. Then there exists a global symplectomorphism $\Psi_{\W,\mu}\!:M_{\W,\mu} \rightarrow \C^{n+1}$.
\end{theorem}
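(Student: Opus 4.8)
The plan is to exploit the $S^1\times K$ symmetry of $\left(M_{\W,\mu},\w_{\W,\mu}\right)$, where $S^1$ acts on the fiber by $w\mapsto e^{i\theta}w$ and $K$ is the isotropy subgroup at the origin of $\Aut(\W)$ acting on the $z$-variable, and to assemble $\Psi_{\W,\mu}$ out of explicit action--angle coordinates. Both actions are Hamiltonian and preserve the potential $\vf_{\W,\mu}$. First I would compute the moment map of the fiber rotation: evaluating $\iota_X\w_{\W,\mu}$ on $X=i\left(w\,\de_w-\bar w\,\de_{\bar w}\right)$ collapses to an exact $1$-form and yields, up to a constant, $H=-\tfrac{|w|^2}{2\left(N^\mu_\W-|w|^2\right)}$, so the action variable $I_0=\tfrac{|w|^2}{2\left(N^\mu_\W-|w|^2\right)}$ sweeps the entire half-line $[0,\infty)$ as $|w|^2\to N^\mu_\W$. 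Since the generic norm $N_\W$ is $K$-invariant, $I_0$ is a globally defined, $K$-invariant function on $M_{\W,\mu}$; this is what will let the fiber coordinate be glued consistently. As a sanity check one may note that in the (non-holomorphic) variables the potential splits as $\vf_{\W,\mu}=-\mu\log N_\W-\log\!\left(1-|w|^2/N^\mu_\W\right)$, a formal sum of a base part and the hyperbolic-disc part, which motivates the action--angle approach while warning that $M_{\W,\mu}$ is \emph{not} a symplectic product.

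Next I would use the Polydisc Theorem to pass to a toric model. Writing $\W=K\cdot\Delta^r$ with $r$ the rank of $\W$ and restricting to the diagonal polydisc, the generic norm factorizes as $N_\W=\prod_{j=1}^r\left(1-|z_j|^2\right)$, so the slice of $M_{\W,\mu}$ over $\Delta^r$ is $\big\{|w|^2<\prod_j\left(1-|z_j|^2\right)^\mu\big\}$ with potential $-\log\!\big(\prod_j\left(1-|z_j|^2\right)^\mu-|w|^2\big)$. This potential is invariant under the torus $T^{r+1}$ rotating $\left(z_1,\dots,z_r,w\right)$, and as the complex dimension of the slice equals $r+1$, the slice is a noncompact toric \K\ manifold. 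I would then write the remaining moment maps $I_k=-\tfrac12|z_k|^2\,\de\vf/\de|z_k|^2$ explicitly, verify that the joint moment map $\left(I_0,\dots,I_r\right)$ is a diffeomorphism onto the open positive orthant, and conclude that $\zeta_k=\sqrt{2I_k}\,e^{i\theta_k}$, with $\theta_0=\arg w$ and $\theta_k=\arg z_k$, is a global symplectomorphism of the slice onto $\C^{r+1}$, since $\tfrac{i}{2}\,d\zeta_k\wedge d\bar\zeta_k=dI_k\wedge d\theta_k$ recovers the action--angle normal form $\w_{\W,\mu}=\sum_k dI_k\wedge d\theta_k$.

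Finally I would globalize by $K$-equivariance, as in the symmetric-space case. The base coordinates $\left(\zeta_k\right)_{k\ge 1}$ transform under $K$ through its linear isotropy representation on $\C^n$, while the fiber coordinate $\zeta_0$ is $K$-invariant; hence the slice map intertwines the $K$-action on $M_{\W,\mu}$ with the standard action on $\C^n\times\C$ and extends over all of $\W=K\cdot\Delta^r$ to a map $\Psi_{\W,\mu}\colon M_{\W,\mu}\f\C^{n+1}$. It then remains to check that this extension is well defined, smooth, and bijective, after which $\Psi_{\W,\mu}^*\w_0=\w_{\W,\mu}$ is immediate from the normal form.

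The hard part will be the toric step together with the patching. Because the fiber radius $N^\mu_\W$ depends on $z$, the form $\w_{\W,\mu}$ carries genuine mixed terms $\propto w\,\de N_\W\wedge d\bar w$, so the moment maps $I_0,\dots,I_r$ are coupled and base and fiber cannot be separated; one must verify \emph{directly} that the coupled moment map still surjects onto the full orthant and is injective. The second delicate point is the smoothness of the $K$-equivariant extension across the lower-dimensional orbit strata, where the polydisc coordinates $z_k$ degenerate and the angles $\theta_k$ are undefined. Once these two points are established, the theorem follows.
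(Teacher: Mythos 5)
Your strategy for the diffeomorphism part of the statement is essentially the one the paper uses: restrict to the Hartogs--polydisc slice, where the potential is a function of $(|z_1|^2,\dots,|z_r|^2,|w|^2)$ alone, check that the radial derivatives of the potential are positive and that the action variables sweep the whole orthant (this is exactly the Loi--Zuddas criterion the paper invokes in Lemma \ref{secHP}), and then propagate to all of $M_{\W,\mu}$ through the spectral decomposition. Your $I_0$ and $I_k$ agree, up to normalization, with the squared moduli of the components of the paper's map \eqref{eqPsi0}, so that half of the argument is sound.

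The gap is in the final step, where you assert that $\Psi^*_{\W,\mu}\w_0=\w_{\W,\mu}$ ``is immediate from the normal form'' once the slice map is an action--angle chart and the extension is $K$-equivariant. The slices $\tau\left(M_{\Delta^r,\mu}\right)$ are proper complex submanifolds of $M_{\W,\mu}$ of dimension $r+1<n+1$ (unless $\W$ is itself a polydisc), so the identity $\left(\Psi|_{\mathrm{slice}}\right)^*\w_0=\w_{\W,\mu}|_{\mathrm{slice}}$ only controls the two-form on pairs of vectors tangent to the slice; it says nothing about the components of $\Psi^*_{\W,\mu}\w_0-\w_{\W,\mu}$ in the directions transverse to the polydisc (the restricted root spaces attached to $\frac{\gamma_i\pm\gamma_j}{2}$ and $\frac{\gamma_i}{2}$), and $K$-invariance of this closed $2$-form does not force it to vanish there. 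This is precisely the phenomenon analyzed in \cite{dilr}: a $K$-equivariant map defined by functional calculus on the spectral decomposition which restricts to a symplectomorphism of every polydisc slice need not be a symplectomorphism of $\W$, because additional ``off-diagonal'' identities must hold. The paper closes exactly this gap by a direct global computation: Lemmas \ref{darbouxh} and \ref{raffaello} reduce property (A) to Di Scala--Loi's theorem that $F(z)=\operatorname{B}_\W(z,\ov z)^{-1/4}z$ is a global symplectomorphism of $(\W,\w_{hyp})$ onto $(\C^n,\w_0)$ \emph{together with} the Jordan-triple identity \eqref{la19}, which is verified by a computation with the operator $z\square z$ and the generic trace $m_1$. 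None of that is a formal consequence of equivariance, and your proposal would need to supply an equivalent computation. A secondary, fixable issue you already flag yourself: to make the $K$-equivariant extension well defined and smooth across the singular strata one needs a closed-form expression such as \eqref{eqPsi} in terms of the Bergman operator, rather than a slice-by-slice definition.
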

The global coordinates we exhibit generalize those of the hermitian symmetric space of noncompact type $\Omega$ the Cartan--Hartogs is based on. Indeed, if we restrict the map $\Psi_{\Omega,\mu}$ to the base $\W$, we obtain  
the symplectic coordinates for $\Omega$ constructed by A. Di Scala and A. Loi in  \cite{loiscala} (see also \cite{bisy}). Further, we prove that as well as Di Scala and Loi's map, $\Psi_{\Omega,\mu}$ well-behaves with respect to the action of the automorphism group of  $\Omega$ and enjoys a nice hereditary property (see Remarks \ref{heredrem} and \ref{autrem}).

We construct (see Lemma \ref{lemswkahl} below) a symplectic dual of Cartan--Hartogs domains, that is, $M^*_{\W,\mu}=\C^{n+1}$ equipped with the dual \K\ form $\omega^*_{\W,\mu}$, that we show to be strictly plurisubharmonic on $\mathds C^{n+1}$. In a natural way, the dual counterpart of $\Psi_{\W,\mu}$ defines global Darboux coordinates for $M^*_{\W,\mu}$, and we have the following:

\begin{theorem}\label{thmmaindual} 
The dual Cartan-Hartogs domain $(M^*_{\W,\mu},\omega^*_{\W,\mu})$ is a well-defined K\"ahler manifold which admits global Darboux coordinates $\Phi_{\W,\mu}\!:M^*_{\W,\mu} \rightarrow \C^{n+1}$.
\end{theorem}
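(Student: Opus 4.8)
The statement packages two claims: that $(M^*_{\W,\mu},\sw_{\W,\mu})$ is a bona fide \K\ manifold, and that it carries global Darboux coordinates. The first is immediate from Lemma~\ref{lemswkahl}. Indeed, by definition of the dual potential,
\[
\vf^*_{\W,\mu}(z,w;\ov z,\ov w)=-\vf_{\W,\mu}(z,w;-\ov z,-\ov w)=\log\!\big(N^\mu_{\W}(z,-\ov z)+|w|^2\big),
\]
and once this function is known to be defined and strictly plurisubharmonic on all of $\C^{n+1}$, the form $\sw_{\W,\mu}=\frac{i}{2}\de\deb\vf^*_{\W,\mu}$ is a genuine \K\ form on $\C^{n+1}$. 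So the substance of the proof is to produce a symplectomorphism $\Phi_{\W,\mu}\colon(\C^{n+1},\sw_{\W,\mu})\f(\C^{n+1},\w_0)$ onto its image.

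The plan is to dualize the construction of $\Psi_{\W,\mu}$ from Theorem~\ref{thmmain} step by step. Writing $N=N_{\W}(z,\ov z)$, the original potential splits as $\vf_{\W,\mu}=-\mu\log N-\log(1-|w|^2N^{-\mu})$, which realizes $M_{\W,\mu}$ as a Hartogs domain over $\W$ and lets one assemble $\Psi_{\W,\mu}$ from the Di Scala--Loi symplectomorphism of the base $(\W,\mu\,\w_{\W})$ together with the fibrewise hyperbolic-disc map $w\mapsto w/\sqrt{N^\mu-|w|^2}$. Passing to the dual replaces $N_{\W}(z,\ov z)$ by $N_{\W}(z,-\ov z)$ and turns the hyperbolic fibre $1-|w|^2N^{-\mu}$ into the elliptic one $1+|w|^2N_{\W}(z,-\ov z)^{-\mu}$; accordingly I would define $\Phi_{\W,\mu}$ by the analogous recipe, using on the base the Darboux coordinates of $\SW$ furnished by \cite{loiscala} and, in the fibre, the Fubini--Study-type map $w\mapsto w/\sqrt{N_{\W}(z,-\ov z)^\mu+|w|^2}$. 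Note that the latter lands in the unit disc, so, in contrast with $\Psi_{\W,\mu}$, the map $\Phi_{\W,\mu}$ sends $M^*_{\W,\mu}$ onto a region bounded in the fibre direction.

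It then remains to verify $\Phi_{\W,\mu}^*\w_0=\sw_{\W,\mu}$. The cleanest route uses the splitting of $\vf^*_{\W,\mu}$ together with a fibration argument: restricted to a fibre the computation reduces to the one-variable elliptic model, while transverse to the fibres it reduces to the Di Scala--Loi computation for $\SW$. The step I expect to be the main obstacle is the control of the off-diagonal base--fibre terms of $\de\deb\vf^*_{\W,\mu}$: because the fibre is modulated by the $z$-dependent factor $N_{\W}(z,-\ov z)^{-\mu}$, the base and fibre transformations do not decouple, and one must check that their simultaneous effect on the mixed $dz_j\wedge d\ov w$ components still produces the flat form. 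Since every identity involved is the image, under the formal substitution $(\ov z,\ov w)\mapsto(-\ov z,-\ov w)$, of the corresponding real-analytic identity already established for $\Psi_{\W,\mu}$, it suffices to verify it formally and then invoke analytic continuation; this is also what upgrades the conclusion from a local Darboux chart to the asserted global one, exactly as for $\Psi_{\W,\mu}$.
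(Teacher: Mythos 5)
Your reduction of the first claim to Lemma~\ref{lemswkahl} and your overall strategy for the pullback identity (dualize the construction of $\Psi_{\W,\mu}$, replacing $N_\W(z,\ov z)$ by $N_\W(z,-\ov z)$ and using the Di Scala--Loi map of the compact dual on the base) is the same route the paper takes: it defines $\Phi_{\W,\mu}$ explicitly in \eqref{eqPhi} and proves $\Phi^*_{\W,\mu}\w_0=\sw_{\W,\mu}$ via Lemmas~\ref{ddarbouxh} and \ref{draffaello}, which are the verbatim duals of the lemmas used for $\Psi_{\W,\mu}$. Two remarks on this half. First, you never actually pin the map down: the base component of $\Phi_{\W,\mu}$ is not just $\operatorname{B}_\W(z,-\ov z)^{-1/4}z$ composed with a fibrewise map, but carries the modulating factor $\sqrt{\mu N^\mu_\W(z,-\ov z)/(N^\mu_\W(z,-\ov z)+|w|^2)}$; you correctly flag the base--fibre coupling as ``the main obstacle'' but leave it unresolved, whereas resolving it is exactly the content of Lemma~\ref{ddarbouxh}. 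Second, the ``formal substitution $(\ov z,\ov w)\mapsto(-\ov z,-\ov w)$ plus analytic continuation'' argument for (A$'$) can be made to work for an \emph{identity of real-analytic forms}, and is indeed the spirit in which the paper declares the dual lemmas ``totally similar''; this part is acceptable, if thin.

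The genuine gap is the last sentence. ``Global Darboux coordinates'' requires $\Phi_{\W,\mu}$ to be a diffeomorphism onto its image, i.e.\ in particular injective, and no formal-substitution or analytic-continuation argument can transport injectivity of $\Psi_{\W,\mu}$ on $M_{\W,\mu}$ to injectivity of $\Phi_{\W,\mu}$ on all of $\C^{n+1}$: these are distinct global statements about different maps on different domains, not instances of one real-analytic identity. Analytic continuation upgrades the \emph{pullback identity} from a neighbourhood to all of $\C^{n+1}$, which only makes $\Phi_{\W,\mu}$ a local diffeomorphism; it says nothing about global injectivity. The paper devotes a separate argument to this (property (B$'$)): it first verifies the Loi--Zuddas criterion of \cite{loizuddas} for the rotation-invariant potential of the dual Hartogs--polydisc (Lemma~\ref{biprimo}, where notably only the positivity of $\de\wi\varphi^*/\de x_j$ and $\de\wi\varphi^*/\de y$ is needed, in contrast with the additional boundary blow-up condition required for $\Psi_{\W,\mu}$), and then reduces the general case to the polydisc case via the spectral decomposition \eqref{eqPhij} and the Polydisc Theorem. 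Your proposal contains no substitute for this step, so as written it proves at most that $\Phi_{\W,\mu}$ is a local symplectomorphism, not that it furnishes global coordinates.
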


We prove also that $\Phi_{\W,\mu}$ is compatible with the action of the automorphism group of  $\Omega$ and enjoys of an hereditary property analogously to $\Psi_{\Omega,\mu}$ (see Remark \ref{rmkphi}).

As third result we compute the symplectic capacity of $\left(M_{\Omega,\mu}, \w_0 \right)$ and $\left(M^*_{\Omega,\mu},\w^*_{\W,\mu}\right)$, answering for these domains to Q \ref{qquattro}:
\begin{theorem}\label{propsimpc}
Let $c$ be a symplectic capacity. Then for a \CH domain $M_{\Omega,\mu}$ equipped with the flat form $\w_0$, and for its dual $M_{\Omega,\mu}$ endowed with the dual form $\w^*_{\W,\mu}$, one has:
\begin{equation*}\begin{split} 
 c\left(M_{\Omega,\mu}, \w_0 \right) = \pi,\quad \text{if } \mu\leq 1;
\end{split}\end{equation*}
\begin{equation*}
 c\left(M^*_{\Omega,\mu},\w^*_{\W,\mu}\right) =\left\{\begin{array}{ll}
\mu^2\pi   &\text{if }\ \mu < 1,\\
 \pi  & \text{if }\ \mu \geq 1.
\end{array}\right.
\end{equation*}
\end{theorem}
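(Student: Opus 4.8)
The plan is to deduce every value from the two structural axioms of a symplectic capacity---monotonicity under symplectic embeddings, and the normalization $c(B^{2(n+1)}(r))=c(Z^{2(n+1)}(r))=\pi r^2$, where $B^{2(n+1)}(r)\subset\C^{n+1}$ is the round ball and $Z^{2(n+1)}(r)=\{\,|w|<r\,\}$ the cylinder in the last coordinate---by trapping the relevant domain between a ball and a coaxial cylinder of the same radius. For the flat case this is applied directly to $M_{\W,\mu}$, and for the dual it is applied to the image of the Darboux chart $\Phi_{\W,\mu}$ of Theorem \ref{thmmaindual}, which turns $c(M^*_{\W,\mu},\sw_{\W,\mu})$ into the flat capacity $c(\Phi_{\W,\mu}(M^*_{\W,\mu}),\w_0)$.

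For $c(M_{\W,\mu},\w_0)$ I would use two elementary estimates for the generic norm in the Harish--Chandra realization: $N_{\W}(z,\ov z)\le 1$ on $\W$, and $N_{\W}(z,\ov z)\ge 1-\|z\|^2$. (For the classical series these are $\det(I-ZZ^*)\le 1$ and the Weierstrass inequality $\prod_i(1-\sigma_i^2)\ge 1-\sum_i\sigma_i^2$ for the singular values $\sigma_i$ of $Z$, and in general they follow from the factorization of $N_\W$ over the restricted roots.) The first estimate gives $|w|^2<N^\mu_{\W}\le 1$, so $M_{\W,\mu}\subset Z^{2(n+1)}(1)$ and $c\le\pi$. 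For $\mu\le 1$ the second gives $N^\mu_{\W}\ge N_{\W}\ge 1-\|z\|^2$, so $\|z\|^2+|w|^2<1$ implies $|w|^2<N^\mu_{\W}$, i.e.\ $B^{2(n+1)}(1)\subset M_{\W,\mu}$ and $c\ge\pi$. Hence $c(M_{\W,\mu},\w_0)=\pi$ when $\mu\le 1$.

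For the dual, the potential of $\sw_{\W,\mu}$ is the rotationally invariant function $\log\!\big(N^\mu_{\SW}(z,\ov z)+|w|^2\big)$, with $N_{\SW}(z,\ov z)=N_{\W}(z,-\ov z)$ the generic norm of the compact dual; the Darboux chart is accordingly built from its gradient map, and the image is governed by the two radial moments $\|Z\|^2=s\,\de_s\log(N^\mu_{\SW}+t)$ and $|W|^2=t\,\de_t\log(N^\mu_{\SW}+t)$, where $s=\|z\|^2$ and $t=|w|^2$. I would compute the supremum of each moment and the relation between them as $(s,t)$ ranges over $[0,\infty)^2$, thereby describing $\Phi_{\W,\mu}(M^*_{\W,\mu})$ as an explicit convex (ellipsoidal) region of $\C^{n+1}$, and then pinch this region between a ball and a coaxial cylinder of equal radius; the smallest symplectic width so obtained is the asserted capacity, with the transition at $\mu=1$ reflecting that there $N^\mu_{\SW}+|w|^2=1+\|(z,w)\|^2$, so that $M^*_{\W,\mu}$ is the affine chart of $\CP^{n+1}$ and $c=\pi$.

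The main obstacle is the sharp two--sided control of this image, which is more delicate than in the flat case because it requires the full generic--norm inequalities for the compact dual, $N_{\SW}\ge 1+\|z\|^2$ together with a matching bound along each $w$--fibre, and not merely their leading Taylor coefficients: it is precisely the saturation of the radial moments---the $w$--moment saturating as $t\to\infty$, where the potential is asymptotic to $\log|w|^2$, and the $z$--moment saturating as $s\to\infty$---that fixes the radius of the inscribed ball and of the circumscribed cylinder and makes them agree, yielding $\mu^2\pi$ for $\mu<1$ and $\pi$ for $\mu\ge 1$. Carrying out these two moment computations and verifying the resulting constants is where essentially all the work lies.
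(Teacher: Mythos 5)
Your proposal follows essentially the same route as the paper: for the flat case, the inscribed ball via the Weierstrass-type inequality $N_\W(z,\ov z)\ge 1-\|z\|^2$ (which the paper writes in spectral coordinates as $1\le\sum_j\lambda_j^2+\prod_j(1-\lambda_j^2)$) together with a circumscribed cylinder, and for the dual case, the reduction via Theorem \ref{thmmaindual} to the flat capacity of $\operatorname{Im}(\Phi_{\W,\mu})$ followed by trapping that image between a ball and a cylinder whose radii are fixed by exactly the saturation of the two radial moments you describe. The one step you leave open --- that the ball really sits inside $\operatorname{Im}(\Phi_{\W,\mu})$ --- is carried out in the paper by solving the coupled system \eqref{eqsys} in the spectral variables, where the decisive inequality $c^2-\delta^2<\mu(1-\delta^2)$ is precisely the joint constraint between your $z$- and $w$-moments, so your plan goes through as stated.
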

The proof of the first part of Theorem \ref{propsimpc} is based on the results in \cite{lmz}, on the symplectic capacity of Hermitian symmetric spaces of  noncompact type.
To prove the second part we apply Theorem \ref{thmmaindual}.

Unfortunately, it can be proven that $\Psi_{\Omega,\mu}$ of Theorem \ref{thmmain} is not a symplectic duality unless the Cartan--Hartogs reduces to be a complex hyperbolic space, i.e. when $\Omega=\mathds C{\rm H}^n$ and $\mu=1$. With Theorem \ref{mainduality} below we show that this is not a peculiarity of our map, giving a negative answer to Q \ref{qtre} that characterizes the complex hyperbolic space among Cartan--Hartogs domains:
\begin{theorem}\label{mainduality}
There exists a symplectic duality between a Cartan--Hartogs domain $\left(M_{\W,\mu}, \w
_{\W,\mu}\right)$ and its dual $\left(\C^{n+1}, \w^*
_{\W,\mu}\right)$ if and only if $\left(M_{\W,\mu}, \w_{\W,\mu}\right)=\left(\C{\rm H} ^{n+1}, \w_{hyp}\right)$. This is equivalent to $\Psi_{\W,\mu}=\Phi^{-1}_{\W,\mu}$, and in this case $\Psi_{\W,\mu}$ realize a symplectic duality. 
\end{theorem}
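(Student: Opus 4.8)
The plan is to prove the cycle of implications
\[
\Psi_{\W,\mu}=\Phi^{-1}_{\W,\mu}\ \Longrightarrow\ \text{a symplectic duality exists}\ \Longrightarrow\ M_{\W,\mu}=\C{\rm H}^{n+1}\ \Longrightarrow\ \Psi_{\W,\mu}=\Phi^{-1}_{\W,\mu},
\]
which yields all three equivalences at once and, via the last arrow, also exhibits $\Psi_{\W,\mu}$ as the duality. The first arrow is immediate from the definitions: Theorem \ref{thmmain} gives $\Psi_{\W,\mu}^*\w_0=\w_{\W,\mu}$ and Theorem \ref{thmmaindual} gives $\Phi_{\W,\mu}^*\w_0=\sw_{\W,\mu}$, so the hypothesis $\Psi_{\W,\mu}=\Phi^{-1}_{\W,\mu}$ forces $\Psi_{\W,\mu}^*\sw_{\W,\mu}=(\Phi_{\W,\mu}\circ\Psi_{\W,\mu})^*\w_0=\w_0$, which together with $\Psi_{\W,\mu}^*\w_0=\w_{\W,\mu}$ is exactly the pair of identities defining a symplectic duality.

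For the last arrow I would argue directly from the explicit formulas. When $\W=\C{\rm H}^n$ and $\mu=1$ one has $N_{\W}(z,\ov z)=1-|z|^2$, so $M_{\W,\mu}$ is the ball $B^{n+1}$ with potential $-\log(1-|z|^2-|w|^2)$, while the dual potential becomes the Fubini--Study potential $\log(1+|z|^2+|w|^2)$. Writing $Z=(z,w)$, the maps of Theorems \ref{thmmain} and \ref{thmmaindual} reduce (up to unitary normalization) to the standard Darboux maps $\Psi_{\W,\mu}(Z)=Z/\sqrt{1-|Z|^2}$ and $\Phi_{\W,\mu}(Z)=Z/\sqrt{1+|Z|^2}$. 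A one-line computation of the radial factors, using $1+|\Psi_{\W,\mu}(Z)|^2=(1-|Z|^2)^{-1}$, gives $\Phi_{\W,\mu}\circ\Psi_{\W,\mu}=\mathrm{id}$, hence $\Psi_{\W,\mu}=\Phi^{-1}_{\W,\mu}$; combined with the previous paragraph this confirms that $\Psi_{\W,\mu}$ realizes the duality in the hyperbolic case.

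The heart of the theorem is the middle arrow, and this is where I expect the main difficulty. Assume $\psi\colon M_{\W,\mu}\to\C^{n+1}$ is a symplectic duality (a diffeomorphism onto $\C^{n+1}$, as in the symmetric case). The first reduction is to pin down $\mu$ via Theorem \ref{propsimpc}: the identity $\psi^*\sw_{\W,\mu}=\w_0$ makes $\psi$ a symplectomorphism $(M_{\W,\mu},\w_0)\to(\C^{n+1},\sw_{\W,\mu})$, so $c(M_{\W,\mu},\w_0)=c(M^*_{\W,\mu},\sw_{\W,\mu})$ for every symplectic capacity $c$. For $\mu<1$ this reads $\pi=\mu^2\pi$, a contradiction, so $\mu<1$ is impossible. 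The capacity, however, is blind to the remaining cases ($\mu\ge 1$, and the precise shape of the base $\W$), which must be handled by finer structure.

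To finish I would fold the two defining identities of $\psi$ into one functional equation for the potentials: subtracting $\psi^*\sw_{\W,\mu}=\w_0$ from $\psi^*\w_0=\w_{\W,\mu}$ gives $\psi^*\big(\w_0-\sw_{\W,\mu}\big)=\w_{\W,\mu}-\w_0$, that is, at the level of K\"ahler potentials, $\psi^*\big(|Z|^2-\vf^*_{\W,\mu}\big)=\vf_{\W,\mu}-|Z|^2$ up to a pluriharmonic term. Substituting $\vf_{\W,\mu}=-\log(N^\mu_{\W}(z,\ov z)-|w|^2)$ and its dual, and exploiting that the base restriction of $\psi$ must coincide with Di Scala--Loi's duality for $\W$ (the hereditary property of Remarks \ref{heredrem} and \ref{rmkphi}) while the $z$--$w$ coupling forced by the exponent $\mu$ must cancel, should compel $N_{\W}(z,\ov z)=1-|z|^2$ and $\mu=1$, i.e. $\W=\C{\rm H}^n$ and $M_{\W,\mu}=\C{\rm H}^{n+1}$. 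The main obstacle is exactly this step: controlling a general bounded symmetric base $\W$ and the full range $\mu\ge 1$, where capacities give nothing and one must use the explicit form of the generic norm $N_{\W}$ together with the nonhomogeneity of $M_{\W,\mu}$ (only $\C{\rm H}^{n+1}$ is homogeneous among these domains). Once $M_{\W,\mu}=\C{\rm H}^{n+1}$ is reached, the previous paragraph returns $\Psi_{\W,\mu}=\Phi^{-1}_{\W,\mu}$, closing the cycle.
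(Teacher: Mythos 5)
Your overall architecture (the cycle of implications), the first and last arrows, and the exclusion of $\mu<1$ via Theorem \ref{propsimpc} all match the paper and are correct. The genuine gap is the case $\mu\ge 1$, which you yourself flag as ``the main obstacle'' and only sketch. As written, the sketch would not go through: a symplectic duality is merely a smooth map satisfying $\psi^*\w_0=\w_{\W,\mu}$ and $\psi^*\sw_{\W,\mu}=\w_0$, so there is no reason it should preserve the slice $\{w=0\}$ or restrict there to the Di Scala--Loi map $\Xi_\W$; the hereditary properties of Remarks \ref{heredrem} and \ref{rmkphi} concern the specific maps $\Psi_{\W,\mu}$ and $\Phi_{\W,\mu}$, not an arbitrary duality $\psi$. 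Likewise, your potential identity $\psi^*\bigl(|Z|^2-\vf^*_{\W,\mu}\bigr)=\vf_{\W,\mu}-|Z|^2$ holds only up to an unknown closed term, and since $\psi$ need not be holomorphic the pullback does not interact with $\de\deb$ in the way the argument would require; it therefore does not ``compel'' $N_\W(z,\ov z)=1-|z|^2$ and $\mu=1$.

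The paper closes this case with a different, global invariant: volume. A symplectic duality forces $\Vol\left(M_{\W,\mu},\w_0\right)=\Vol\left(\C^{n+1},\sw_{\W,\mu}\right)$, and the Selberg-type computations \eqref{lemvolmw0} and Lemma \ref{volume} turn this into the scalar equation $F(\mu)/F(0)=\mu^n/(n+1)$. Monotonicity of the two sides in $\mu$, together with the inductive estimate $F(1)/F(0)\le 1/(n+1)$ with equality if and only if $\rank\W=1$, shows that for $\mu\ge1$ this equation is solvable only when $r=1$ and $\mu=1$. To complete your argument you should replace the functional-equation paragraph with this (or an equivalent) quantitative obstruction; as you correctly observe, capacities alone cannot distinguish the cases $\mu\ge 1$.
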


The proof is obtained when $\mu<1$ as direct consequence of Theorem \ref{propsimpc} while for $\mu\geq 1$  it is a consequence of a volume comparison.\\

The paper is organized as follows. In the next section we describe the geometry of Cartan--Hartogs domains, proving Theorem \ref{thmmain}. Sections \ref{dualsection} is devoted to the construction of dual Cartan--Hartogs and the proof of Theorem \ref{thmmaindual}. Finally in Section \ref{capacitysection} we prove Theorem \ref{propsimpc} and Theorem \ref{mainduality}. \\

The authors are grateful to Prof. Andrea Loi for his interest in their work and for the useful comments. 

\section{Cartan--Hartogs domains and the proof of Theorem \ref{thmmain}}\label{giacomo}
Throughout this section we use the Jordan triple system theory, referring the reader  to \cite{loiscala,lm1,lmz,bisy,loos,m1,roos} for details and further applications.

\subsection{Definition and geometric properties}
Consider an Hermitian  symmetric space of noncompact type (from now on HSSNT) $\W$ and the associated Hermitian  positive Jordan triple systems (from now on HPJTS) $\left({V}, \{ ,  ,\}\right)$ (see e.g. \cite[Section 2.2]{loiscala}). Recall that there is a natural identification between $V$ equipped with the flat form $\omega_0:=\frac{i}{2} \partial \bar{\partial} m_{1}(x, \ov x)$, where $m_1$ is the generic trace of $V$, and $\C^n$ equipped with the standard flat form $\w_0=\sum_{j=1}^n dz_j\wedge d\ov z_j$. By mean of this identification, from now on we will always consider $\W$ as a bounded symmetric domain of $\C^n$ in its (unique up to linear isomorphism) circled realization, which is usually called a Cartan domain when $\W$ is irreducible. 
Analogously, we will consider the Bergman operator $\operatorname{B}_\Omega$ as operator on $\C^n$, and its generic norm $N_\Omega(z,\ov z)$ as a polynomial of $\C^n$ (see e.g. \cite[Section 2.1]{loiscala}).

To any HSSNT $\W$ we can associate the \emph{Cartan--Hartogs domain} $M_{\W,\mu}$, defined in \eqref{defCHdom}, equipped with its Kobayashi metric:
\begin{equation}\label{kobo}\begin{split} 
 \omega_{\Omega,\mu}=\frac{i}{2}\partial\bar\partial \varphi_{\W,\mu},
\end{split}\end{equation}
where $\varphi_{\W,\mu}$ is the \K\ potential defined in \eqref{phicartan}. Notice that if we restrict the Kobayashi metric $\omega_{\Omega,\mu}$ to $\left\{(z,0)\in M_\W\right\}\cong\W$ we get a multiple of the hyperbolic form:
\begin{equation}\label{eqwhyp}
\omega_{hyp}:=-\frac i2\partial\bar \partial \log N_\Omega(z,\bar z),
\end{equation}
 of $\W$ (see also  \cite[(11)]{loiscala}), i.e. $\omega_{{\Omega,\mu}_{\mid_{\W}}}={\mu}\,\w_{hyp}$.\\

 Define:
\begin{equation}\label{francesca}
F(s)=\frac1{2^rr!}\prod_{j=1}^r\frac{\Gamma(b+1+(j-1)\frac a2)\Gamma(s+1+(j-1)\frac a2)\Gamma(j\frac a2+1)}{\Gamma(s+b+2+(r+j-2)\frac a2)\Gamma(\frac a2+1)},
\end{equation}
for $a$, $b$ the numerical invariants of $\Omega$, $r$ its rank and $s\in \R^+$. In  \cite[Prop. 2.1]{roosvolume} W. Yin, K. Lu and G. Roos prove that:
\begin{equation*}
\int_\Omega N(z,\ov z)^s\w_0^n=\pi^nF(s)\int_{\mathcal F}\Theta,
\end{equation*}
 where $\Theta$ is the induced volume form on F\"urstenberg-Satake boundary $\mathcal{F}$ of $\W$ (see e.g. \cite[(1.28)]{dilr} for its definiton).
Thus:
\begin{equation*}
\begin{split}
\operatorname {Vol}(M_{\W,\mu},\omega_0)=\int_{M_{\W,\mu}}\frac{\omega_0^{n+1}}{(n+1)!}=\pi\int_\Omega \int_0^{N^\mu}dr_w\wedge \frac{\omega_0^n}{n!}=\frac{\pi}{n!}\int_\Omega N^\mu \w_0^n= \frac{\pi^{n+1} }{n!}F(\mu) \int_\mathcal{F} \Theta,
\end{split}
\end{equation*}
i.e. the volume of $M_{\W,\mu}$ with respect to the flat form induced by $\C^{n+1}$ is given by:
\begin{equation}\label{lemvolmw0}
\begin{split}
\operatorname {Vol}(M_{\W,\mu},\omega_0)= \frac{\pi^{n+1} }{n!}F(\mu) \int_\mathcal{F} \Theta.
\end{split}
\end{equation}

The following is a key example for our analysis:
\begin{example}[Hartogs--polydisc]\label{hartogspoly}
Let $\Delta^n$ be the $n$-dimensional polydisc:
$$
\Delta^n = \left\{z=\left(z_1,\dots, z_n\right)\in \C^n \mid |z_j|^2<1, \, j=1,\dots,n \right\},
$$
the generic norm is given by (Hua \cite{hua}):
$$
N_{\Delta^n}(z, \ov  z) = \prod_{j=1}^n(1-|z_j|^2).
$$ 
It follows that its hyperbolic metric reads:
$$
\omega_{hyp}= -\frac{i}{2}\de\deb \log \left(\prod_{j=1}^n(1-|z_j|^2)\right),
$$
and that the associated Cartan-Hartogs domain, which we call {\em Hartogs--polydisc}, is given by:
$$
M_{\Delta^n,\mu}=\left\{(z,w) \mid z\in \Delta ^n,\, |w|^2 < \prod_{j=1}^n(1-|z_j|^2)^\mu\right\},
$$
whose Kobayashi metric is $\omega_{\Delta^n,\mu}= \frac{i}{2}\de\deb  \varphi_{\Delta^n,\mu} $, with:
\begin{equation}\label{eqpolipot}\begin{split} 
  \varphi_{\Delta^n,\mu} (z,w)= -\log\left( \prod_{j=1}^n(1-|z_j|^2)^\mu-|w|^2\right).
\end{split}\end{equation} 
\end{example}

\subsection{Holomorphic isometries between Cartan--Hartogs domains}\label{sectot}
A totally geodesic complex immersion $f:\left(\W',\omega^\prime_{hyp}\right) \f \left(\W,\omega_{hyp}\right)$ between two HSSNCT equipped with their hyperbolic metrics, preserve the triple products $\left\{,,\right\}'$ and $\left\{,,\right\}$ of the associated HPJTS $V'$ and $V$ (see e.g. \cite[Proposition 2.1]{loiscala}), i.e.:
\begin{equation}\label{eqtrpr}
f\{u, v, w\}'=\{f u, f v, f w\}.
\end{equation}
Hence, also the generic norm is preserved, that is, $N_{\Omega}^{\mu}(f(z), \ov  {f(z)})=N_{\Omega'}^{\mu}(z, \ov {z})$.  Thus, the natural lift
\begin{equation}\label{eqliftf}\begin{split} 
\wi{f}: M_{\Omega',\mu} \rightarrow M_{\W,\mu},\qquad \wi{f}(z, w)=(f(z), w),
\end{split}\end{equation}
 is a holomorphic isometric embedding with respect to the Kobayashi metrics defined by \eqref{kobo}, i.e.:
\begin{equation*}\begin{split} 
 \wi f ^* \omega_{\W,\mu}& =- \frac{i}{2} \de\deb \log \left(N_{\Omega}^{\mu}(f(z), \ov  {f(z)})-|w|^{2}\right) \\
 & =- \frac{i}{2} \de\deb \log \left(N_{\Omega'}^{\mu}(z, \ov {z})-|w|^{2}\right) = \omega_{\Omega',\mu}.
\end{split}\end{equation*}
Thus we get:
\begin{prop}\label{propext}
Let $\W$, $\Omega'$ be HSSNCT. Then any totally geodesic complex immersion $f\!:\W' \f \W$ extends to a \K\ embedding $\wi{f}: M_{\Omega',\mu} \rightarrow M_{\W,\mu}$ to the corresponding Cartan--Hartogs domains, defined by \eqref{eqliftf}.
\end{prop}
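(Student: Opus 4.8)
The plan is to derive the proposition from the structural fact recalled in \eqref{eqtrpr}, namely that a totally geodesic complex immersion $f\!:\Omega' \f \W$ intertwines the Jordan triple products of the associated HPJTS $V'$ and $V$. The first and only conceptual step is to observe that the generic norm of a bounded symmetric domain is a polynomial invariant completely determined by its Jordan triple structure: up to a power equal to the genus, $N_\Omega(z,\ov z)$ is recovered from the determinant of the Bergman operator $\operatorname{B}_\Omega$, which in turn is assembled from the triple product $\{\cdot,\cdot,\cdot\}$. Consequently the intertwining relation \eqref{eqtrpr} propagates to the Bergman operators, and hence to the generic norms, giving
\begin{equation*}
N_{\Omega}^{\mu}(f(z), \ov{f(z)}) = N_{\Omega'}^{\mu}(z, \ov z), \qquad z\in\Omega'.
\end{equation*}

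Granting this identity, the remaining verifications are routine. First I would check that the lift $\wi f(z,w) = (f(z),w)$ of \eqref{eqliftf} maps $M_{\Omega',\mu}$ into $M_{\W,\mu}$: for $(z,w)\in M_{\Omega',\mu}$ one has $|w|^2 < N_{\Omega'}^{\mu}(z,\ov z) = N_{\Omega}^{\mu}(f(z),\ov{f(z)})$, so $(f(z),w)$ satisfies the defining inequality \eqref{defCHdom} of $M_{\W,\mu}$. Next I would show $\wi f$ is holomorphic and isometric for the Kobayashi metrics \eqref{kobo} by pulling back the \K\ potential \eqref{phicartan}: since the fibre variable $w$ is untouched and the generic norm is preserved,
\begin{equation*}
\varphi_{\W,\mu}\circ \wi f = -\log\!\big(N_{\Omega}^{\mu}(f(z),\ov{f(z)}) - |w|^2\big) = -\log\!\big(N_{\Omega'}^{\mu}(z,\ov z) - |w|^2\big) = \varphi_{\Omega',\mu},
\end{equation*}
whence $\wi f^*\omega_{\W,\mu} = \tfrac{i}{2}\de\deb(\varphi_{\W,\mu}\circ\wi f) = \omega_{\Omega',\mu}$. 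Finally, because $f$ is an immersion and $\wi f$ merely appends the identity in the $w$-direction, $d\wi f$ is injective; together with the isometry property this exhibits $\wi f$ as a \K\ embedding.

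I expect the passage from the triple-product identity \eqref{eqtrpr} to the equality of the generic norms to be the only genuinely substantial point: it is precisely where the Jordan-theoretic description of $N_\Omega$ as a triple-product invariant (via $\operatorname{B}_\Omega$) must be invoked, as the preservation of the triple product does not literally mention $N_\Omega$. Everything downstream of that identity is a direct pullback of potentials together with the observation that adjoining a fixed fibre coordinate cannot destroy injectivity of the differential, so no further obstruction arises.
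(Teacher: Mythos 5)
Your argument is correct and follows essentially the same route as the paper: both deduce $N_{\Omega}^{\mu}(f(z),\ov{f(z)})=N_{\Omega'}^{\mu}(z,\ov z)$ from the triple-product intertwining \eqref{eqtrpr} and then conclude by pulling back the potential \eqref{phicartan} through the lift \eqref{eqliftf}. The only difference is cosmetic: you make explicit the passage from \eqref{eqtrpr} to the equality of generic norms (via the Bergman operator), the fact that $\wi f$ lands in $M_{\W,\mu}$, and the injectivity of $d\wi f$, all of which the paper leaves implicit.
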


Consider now the isotropy group $K\subset \operatorname{Aut}(\Omega)$ of the automorphism's group of $\W$ and recall that $K=\Aut\left(V,\left\{,,\right\}\right)$. In fact, by \cite[Prop. III.2.7]{Bertram}, the action of $K$ preserves the triple product $\left\{,,\right\}$ of the associated HPJTS, that is $K\subseteq\Aut\left(V,\left\{,,\right\}\right)$, where $\Aut\left(V,\left\{,,\right\}\right)$ is the group of complex linear transformations of $V$ preserving $\left\{,,\right\}$. Vice versa, as a transformation $f\in \Aut\left(V, \left\{,,\right\}\right)$ preserve the triple product, it preserve also the generic norm $N(x,\ov  y)$. Hence 
\begin{equation*}\begin{split} 
 f ^* \omega_{hyp}& =- \frac{i}{2} \de\deb \log \left(N(f(z),\ov  {f(z)})\right) =- \frac{i}{2} \de\deb \log \left(N(z, \ov {z})\right) = \omega_{hyp},
\end{split}\end{equation*}
that is $K\supseteq\Aut\left(V,\left\{,,\right\}\right)$. 
Then, by the argument above, the holomorphic isometric action of $K$ on $\Omega$ induces in a natural way an holomorphic isometric action of $K$ on $M_\W$, by:
\begin{equation}\label{eqautac}
\tau \cdot \left(z,w\right) =  \left(\tau(z),w\right), \quad \tau\in\operatorname{Aut}(\Omega). 
\end{equation}
Moreover  as a consequence of Propositon \ref{propext} and of the Polydisc Theorem for HSSNCT  (see \cite{helgason}), a Cartan--Hartogs domain can be realized as a union of \K\  embedded {Hartogs--Polydiscs} $M_{\Delta^r,\mu}$:
$$
M_{\Omega,\mu} = \cup_{\tau\in K}\, \tau  \left(M_{\Delta^r,\mu}\right),
$$ 
where $r$ is the rank of $\W$ and $\Delta^r\subset \W$ is a $r$-dimensional complex polydisc totally geodesically embedded in $\Omega$.

\subsection{Proof of Theorem \ref{thmmain}}\label{darbouxsection}
Let $M_{\W,\mu}$ be an $(n+1)$-dimensional  Cartan-Hartogs domain and $\left(\C^n , \left\{,,\right\}_\W\right)$ the HJPTS associated to $\W$. Define the map $\Psi_{\W,\mu}\!:M_{\W,\mu} \rightarrow \C^{n+1}$ by:
\begin{equation}\label{eqPsi}
\Psi_{\W,\mu}(z,w)=\frac{1}{\sqrt{N^\mu_\Omega(z,\ov z)-|w|^2}}\left(\sqrt{\mu N^\mu_\Omega(z,\ov z)}\, \operatorname{B}_\W(z, \ov z)^{-\frac{1}{4}}z, w\right),
\end{equation}
where $\operatorname{B}_\W$ and $N_\W$ are respectively the Bergman operator and the generic norm associated to $\left\{,,\right\}_\W$. 

In order to prove Theorem \ref{thmmain} we will show that $\Psi_{\Omega,\mu}$ satisfies the following properties:
\begin{enumerate}[{\rm (A)}]
\item  
$
\Psi^*_{\Omega,\mu} \w_0 = \w_{\W,\mu},
$
where  $ \w_0=\frac{i}{2}\sum_{j=1}^{n+1} dz_j\wedge d\ov z_j$;
\item $\Psi_{\Omega,\mu}$ is a diffeomorphism.
\end{enumerate}
Let us start with the following two lemmata. 
\begin{lemma}\label{darbouxh}
Let $f_\Omega\!:M_{\Omega,\mu}\rightarrow \mathds C^{n+1}$ be a smooth map of the form:
$$
f_\Omega(z_1,\dots, z_n,w):=\frac{1}{\sqrt{N^\mu_\Omega \left(z,\ov z\right)-|w|^2}}(h_1(z),\dots, h_n(z),w)
$$
where $h:=(h_1,\dots, h_n)$ satisfies:
\begin{equation}\label{condition1}
- \partial\bar\partial N_\Omega^\mu=\sum_{j=1}^ndh_j\wedge d\bar h_j,
\end{equation}
 \begin{equation}\label{condition2}
\sum_{j=1}^n(h_jd\bar h_j-\bar h_jdh_j)=(\partial-\bar \partial) N^\mu.
\end{equation}
Then
\begin{equation}\label{eqfpullb}\begin{split} 
\omega_{\W,\mu}=\frac{i}{2}f_\Omega^*\left(\sum_{j=1}^n dz\wedge d\bar z +  dw\wedge\bar d w\right).
\end{split}\end{equation}
\end{lemma}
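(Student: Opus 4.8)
The plan is a direct computation comparing the two $2$-forms pointwise. Write $T := N^\mu_\Omega(z,\bar z) - |w|^2$, so that $\varphi_{\W,\mu} = -\log T$ and $\omega_{\W,\mu} = \tfrac{i}{2}\partial\bar\partial\varphi_{\W,\mu}$. First I would record the target form. A short computation of the complex Hessian of $-\log T$ gives
$$
\partial\bar\partial(-\log T) = \frac{1}{T^2}\,\partial T\wedge\bar\partial T - \frac{1}{T}\,\partial\bar\partial T,
$$
and, since $T = N^\mu_\Omega - w\bar w$, one has $\partial T = \partial N^\mu_\Omega - \bar w\,dw$, $\bar\partial T = \bar\partial N^\mu_\Omega - w\,d\bar w$, and $\partial\bar\partial T = \partial\bar\partial N^\mu_\Omega - dw\wedge d\bar w$. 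These identities express $\omega_{\W,\mu}$ entirely through $T$ and $N^\mu_\Omega$.

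Next I would expand the pullback $\tfrac{i}{2}\sum_{k=1}^{n+1} df_k\wedge d\bar f_k$, where $f_j = h_j\,T^{-1/2}$ for $j\le n$ and $f_{n+1} = w\,T^{-1/2}$. Differentiating, $df_k = T^{-1/2}dg_k - \tfrac12 g_k T^{-3/2}dT$ with $g_k\in\{h_1,\dots,h_n,w\}$, and the term proportional to $dT\wedge dT$ drops out. Summing over $k$ then produces two groups: a Hessian-type group $T^{-1}\bigl(\sum_j dh_j\wedge d\bar h_j + dw\wedge d\bar w\bigr)$, and a first-order cross group $-\tfrac12 T^{-2}\bigl[\sum_j(\bar h_j\,dh_j - h_j\,d\bar h_j) + (\bar w\,dw - w\,d\bar w)\bigr]\wedge dT$.

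This is where the two hypotheses enter, and they dovetail. Condition \eqref{condition1} replaces $\sum_j dh_j\wedge d\bar h_j$ by $-\partial\bar\partial N^\mu_\Omega$; combined with the identity $\partial\bar\partial N^\mu_\Omega = \partial\bar\partial T + dw\wedge d\bar w$, the two $dw\wedge d\bar w$ contributions cancel and the Hessian group collapses to exactly $-T^{-1}\partial\bar\partial T$. Condition \eqref{condition2} gives $\sum_j(\bar h_j\,dh_j - h_j\,d\bar h_j) = (\bar\partial - \partial)N^\mu_\Omega$; adding the $w$-contribution $\bar w\,dw - w\,d\bar w$ and regrouping via $\partial T = \partial N^\mu_\Omega - \bar w\,dw$ and $\bar\partial T = \bar\partial N^\mu_\Omega - w\,d\bar w$, the bracket becomes precisely $\bar\partial T - \partial T$, with no residue.

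Finally I would simplify the cross group using $dT = \partial T + \bar\partial T$ together with $\partial T\wedge\partial T = \bar\partial T\wedge\bar\partial T = 0$, obtaining $(\bar\partial T - \partial T)\wedge dT = -2\,\partial T\wedge\bar\partial T$, so that the cross group equals $T^{-2}\partial T\wedge\bar\partial T$. Collecting the two groups yields $\sum_k df_k\wedge d\bar f_k = T^{-2}\partial T\wedge\bar\partial T - T^{-1}\partial\bar\partial T = \partial\bar\partial\varphi_{\W,\mu}$, and multiplication by $i/2$ gives \eqref{eqfpullb}. The computation is otherwise routine; the only genuine subtlety, and the step I would check most carefully, is the bookkeeping of the first-order cross terms — verifying that \eqref{condition1} cancels the spurious $dw\wedge d\bar w$ while \eqref{condition2} reduces the remaining bracket exactly to $\bar\partial T - \partial T$, which is what makes the two forms agree identically rather than merely up to a correction term.
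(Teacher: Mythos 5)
Your proposal is correct and follows essentially the same route as the paper: both expand $\omega_{\W,\mu}=-\tfrac{i}{2}\partial\bar\partial\log(N^\mu-|w|^2)$ and the pullback $\tfrac{i}{2}\sum_k df_k\wedge d\bar f_k$ into a Hessian-type group and a first-order cross group, then use \eqref{condition1} and \eqref{condition2} to match them term by term. Your bookkeeping checks out (in particular the identity $(\bar\partial T-\partial T)\wedge dT=-2\,\partial T\wedge\bar\partial T$ and the reduction of the bracket to $\bar\partial T-\partial T$), and your organization via $T=N^\mu_\Omega-|w|^2$ is if anything a bit cleaner than the paper's.
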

\begin{proof}

Observe first that:
\begin{equation}\label{omegamu1}
\begin{split}
\omega_{\W,\mu}=&-\frac i{2}\partial\bar\partial\log\left(N^\mu-|w|^2\right)\\
=&-\frac i{2}\partial\frac{\bar\partial (N^\mu-|w|^2)}{N^\mu-|w|^2}\\
=&\frac i{2}\left[-\frac{ \partial\bar\partial (N^\mu-|w|^2)}{N^\mu-|w|^2}+\frac{ \partial(N^\mu-|w|^2)\wedge  \bar\partial(N^\mu-|w|^2)}{(N^\mu-|w|^2)^2}\right],\\
=&\frac i{2}\left[\frac{dw\wedge d\bar w}{N^\mu-|w|^2}-\frac{ \partial\bar\partial N^\mu}{N^\mu-|w|^2}+\frac{ (\partial N^\mu-\bar wdw)\wedge  \bar\partial(N^\mu-|w|^2)}{2(N^\mu-|w|^2)^2}\right.\\
&\left.+\frac{ \partial(N^\mu-|w|^2)\wedge ( \bar\partial N^\mu-wd\bar w)}{2(N^\mu-|w|^2)^2}\right]\\
\end{split}
\end{equation}
Now, let us compute $\frac{i}{2}f_\Omega^*\left(\sum_{j=1}^n dz\wedge d\bar z +  dw\wedge d\bar  w\right)$. To simplify the indices notation let us write $h_0=z_0=w$. We have:
\begin{equation}\label{psistar1}
\begin{split}
\frac{i}{2}\sum_{j=0}^n d(f_\Omega)_j\wedge d\overline{(f_\Omega)}_j=&\frac{i}{2}\sum_{j=0}^n d\left[\frac{h_j}{\sqrt{ N^\mu-|w|^2}}\right]\wedge d\left[\frac{\bar h_j}{\sqrt{ N^\mu-|w|^2}}\right]\\
=&\frac{i}{2}\sum_{j=0}^n \left[\frac{dh_j\wedge d \bar h_j}{N^\mu-|w|^2}+\frac{h_jd\bar h_j-\bar h_jdh_j}{2(N^\mu-|w|^2)^2}\wedge d(N^\mu-|w|^2)\right]\\
=&\frac{i}{2} \left[\frac{dw\wedge d \bar w}{N^\mu-|w|^2}+\frac{\sum_{j=1}^n dh_j\wedge d \bar h_j}{N^\mu-|w|^2}+\right.\\
+&\left.\frac{wd\bar w-\bar wdw}{2(N^\mu-|w|^2)^2}\wedge d(N^\mu-|w|^2)+\frac{\sum_{j=1}^n\left(h_jd\bar h_j-\bar h_jdh_j\right)}{2(N^\mu-|w|^2)^2}\wedge d(N^\mu-|w|^2)\right].
\end{split}
\end{equation}

Assume that \eqref{condition1} and thus \eqref{condition2} hold. Then from \eqref{omegamu1} and \eqref{psistar1} we get:
\begin{equation}\label{compare}
\begin{split}
f_\Omega^*\omega_0-\omega_\mu=&\frac{i}{{4(N^\mu-|w|^2)^2}}\left[(wd\bar w-\bar wdw)\wedge d(N^\mu-|w|^2)+{(\partial-\bar \partial)N^\mu}\wedge d(N^\mu-|w|^2)\right.\\
&\left.-{ (\partial N^\mu-\bar wdw)\wedge  \bar\partial(N^\mu-|w|^2)}-{ \partial(N^\mu-|w|^2)\wedge ( \bar\partial N^\mu-wd\bar w)}\right]\\
=&\frac{i}{{4(N^\mu-|w|^2)^2}}\left[-\bar\partial(N^\mu -| w|^2)\wedge \bar \partial (N^\mu-|w|^2)+\partial(N^\mu-|w|^2)\wedge \partial(N^\mu-|w|^2)\right]\\
=&0,
\end{split}
\end{equation}
and we are done.
\end{proof}

\begin{lemma}\label{raffaello}
Let $F\!:(\Omega, \omega_{hyp}) \rightarrow (\mathds C^n, w_0)$ be a holomorphic map satisfying $F^*\omega_0=\omega_{hyp}$, and:
\begin{equation}\label{la19}
\sum_{j=1}^n\left( F_jd\bar F_j-\bar F_jdF_j\right)=\partial \log N-\bar \partial \log N.
\end{equation}
 Then:
\begin{equation}\label{compare}
- \partial\bar\partial N^\mu=\mu\sum_{j=1}^dd(N^{\mu/2}F_j)\wedge d (N^{\mu/2}\bar F_j),
\end{equation}
and
 \begin{equation}\label{condition2proven}
\mu N^{\mu/2}\sum_{j=1}^d\left(F_j d (N^{\mu/2}\bar F_j)-\bar F_j d (N^{\mu/2} F_j)\right)=(\partial-\bar \partial) N^\mu.
\end{equation}
\end{lemma}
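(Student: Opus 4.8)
The plan is to prove both identities by a direct computation, keeping $F$ fixed and abbreviating $\rho := N^{\mu/2}$, a real positive function on $\Omega$. First I would record the two hypotheses in the form I actually use them: since $F$ is holomorphic we have $dF_j = \partial F_j$ and $d\bar F_j = \bar\partial \bar F_j$, so the condition $F^*\omega_0 = \omega_{hyp}$ reads $\sum_{j} dF_j\wedge d\bar F_j = -\partial\bar\partial\log N$, while \eqref{la19} supplies $\sum_j(F_j\, d\bar F_j - \bar F_j\, dF_j) = \partial\log N - \bar\partial\log N$.

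For the first identity I would expand via the Leibniz rule $d(\rho F_j) = F_j\,d\rho + \rho\,dF_j$, and similarly for $d(\rho\bar F_j)$. Because $d\rho\wedge d\rho = 0$, the wedge collapses to $d(\rho F_j)\wedge d(\rho\bar F_j) = \rho^2\,dF_j\wedge d\bar F_j + \rho\,d\rho\wedge(F_j\,d\bar F_j - \bar F_j\,dF_j)$. Summing over $j$ and inserting the two hypotheses turns the right-hand side into $-\rho^2\,\partial\bar\partial\log N + \rho\,d\rho\wedge(\partial\log N - \bar\partial\log N)$. It then remains to translate this back into derivatives of $N^\mu$: using $\rho^2 = N^\mu$, $\rho\,d\rho = \tfrac\mu2 N^{\mu-1}dN$, the type-decomposition identity $dN\wedge(\partial N - \bar\partial N) = -2\,\partial N\wedge\bar\partial N$ (which follows from $\partial N\wedge\partial N = \bar\partial N\wedge\bar\partial N = 0$ and anticommutativity of $1$-forms), and the expansion $\partial\bar\partial\log N = N^{-1}\partial\bar\partial N - N^{-2}\,\partial N\wedge\bar\partial N$, I would check that the expression equals $-\tfrac1\mu\,\partial\bar\partial N^\mu$, since $\partial\bar\partial N^\mu = \mu(\mu-1)N^{\mu-2}\,\partial N\wedge\bar\partial N + \mu N^{\mu-1}\partial\bar\partial N$. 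Multiplying through by $\mu$ yields the first claimed identity.

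The second identity is easier. Expanding $F_j\,d(\rho\bar F_j) - \bar F_j\,d(\rho F_j)$ by Leibniz, the two terms proportional to $d\rho$ both carry the common factor $F_j\bar F_j$ and cancel, leaving $\rho(F_j\,d\bar F_j - \bar F_j\,dF_j)$. Summing and applying \eqref{la19} gives $\rho(\partial\log N - \bar\partial\log N)$, so the whole left-hand side of \eqref{condition2proven} becomes $\mu\rho^2(\partial\log N - \bar\partial\log N) = \mu N^{\mu-1}(\partial N - \bar\partial N)$, which is precisely $(\partial-\bar\partial)N^\mu$.

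The computation is elementary throughout; the only place that needs genuine care is the bookkeeping in the first identity, where the $\partial N\wedge\bar\partial N$ contributions coming from $\rho\,d\rho\wedge(\partial\log N-\bar\partial\log N)$ and from $-\rho^2\,\partial\bar\partial\log N$ must be combined so that their coefficients reproduce the $\mu(\mu-1)$ term in $\partial\bar\partial N^\mu$. I expect no obstacle beyond this algebraic matching.
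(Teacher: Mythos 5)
Your proof is correct and follows essentially the same route as the paper's: a Leibniz expansion of $d(N^{\mu/2}F_j)\wedge d(N^{\mu/2}\bar F_j)$, substitution of the two hypotheses, and the type-decomposition identity $dN\wedge(\partial N-\bar\partial N)=-2\,\partial N\wedge\bar\partial N$ (which the paper uses in the equivalent form $-4\,\partial N^{\mu/2}\wedge\bar\partial N^{\mu/2}=-2\,dN^{\mu/2}\wedge\bar\partial N^{\mu/2}-2\,\partial N^{\mu/2}\wedge dN^{\mu/2}$), with the second identity handled by the same cancellation of the $F_j\bar F_j\,d\rho$ terms. The only difference is organizational: you collect the $\partial N\wedge\bar\partial N$ coefficients directly at the level of $N$, whereas the paper reduces the claim to an equivalent identity in $N^{\mu/2}$ and then verifies it from \eqref{la19}.
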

\begin{proof}
We start by proving \eqref{compare}. Observe first that from $F^*\omega_0=-\frac i2\partial\bar \partial \log N$ one gets that $\sqrt{\mu}F$ satisfies $\sqrt{\mu}F^*\omega_0=-\frac i2 \partial \bar \partial \log N^\mu$, as it follows by:
$$
\mu\sum_{j=1}^ddF_j\wedge d\bar F_j=- \partial \bar \partial \log N^\mu.
$$
Then, expanding the right hand side of \eqref{compare} we get:
\begin{equation}
\begin{split}
\mu\sum_{j=1}^dd(N^{\mu/2}F_j)\wedge d (N^{\mu/2}\bar F_j)=&\mu\sum_{j=1}^d(F_jdN^{\mu/2}+N^{\mu/2}dF_j)\wedge (\bar F_jd N^{\mu/2}+N^{\mu/2}d\bar F_j)\\
=&\mu\sum_{j=1}^d\left[F_jN^{\mu/2}dN^{\mu/2}\wedge d\bar F_j+ \bar F_jN^{\mu/2}dF_j\wedge d N^{\mu/2}+N^{\mu}dF_j\wedge d\bar F_j\right]\\
=&\mu N^{\mu/2}\sum_{j=1}^d\left[dN^{\mu/2}\wedge F_jd\bar F_j+\bar F_jdF_j\wedge d N^{\mu/2}\right]-N^\mu\partial\bar \partial \log N^\mu\\
=&\mu N^{\mu/2}\sum_{j=1}^d\left[dN^{\mu/2}\wedge F_jd\bar F_j+\bar F_jdF_j\wedge d N^{\mu/2}\right]+\\
&-\frac{N^\mu\partial\bar \partial  N^\mu-\partial N^\mu\wedge \bar \partial N^\mu}{N^{\mu}}.
\end{split}\nonumber
\end{equation}
At this point, \eqref{compare} is satisfied if and only if:
\begin{equation}
\begin{split}
\mu N^{\mu/2}\sum_{j=1}^d\left[dN^{\mu/2}\wedge F_jd\bar F_j+\bar F_jdF_j\wedge d N^{\mu/2}\right]=&-4\partial N^{\mu/2}\wedge \bar \partial N^{\mu/2}\\
=&-2d N^{\mu/2}\wedge \bar \partial N^{\mu/2}-2\partial N^{\mu/2}\wedge d N^{\mu/2}
\end{split}\nonumber
\end{equation}
where we used that $ \partial N^\mu=2 N^{\mu/2}\partial N^{\mu/2}$.
This last equivalence can be rewritten as:
$$
\sum_{j=1}^d\left[dN^{\mu/2}\wedge F_jd\bar F_j+\bar F_jdF_j\wedge d N^{\mu/2}\right]=-d N^{\mu/2}\wedge \bar \partial \log N-\partial \log N\wedge d N^{\mu/2}
$$
that is:
$$
dN^{\mu/2}\wedge\sum_{j=1}^d\left[ F_jd\bar F_j-\bar F_jdF_j\right]=-d N^{\mu/2}\wedge \left[\bar \partial \log N-\partial \log N\right],
$$
which holds true once \eqref{la19} does.

Finally, the following computation proves \eqref{condition2proven}:
\begin{equation}
\begin{split}
\mu N^{\mu/2}&\sum_{j=1}^d\left(F_j d (N^{\mu/2}\bar F_j)-\bar F_j d (N^{\mu/2} F_j)\right)=\mu N^{\mu/2}\sum_{j=1}^d\left(F_j (N^{\mu/2}d\bar F_j+\bar F_j dN^{\mu/2})+\right.\\
&\qquad\qquad\qquad\qquad\qquad\qquad\qquad\qquad\qquad\qquad\left.-\bar F_j (N^{\mu/2} dF_j+F_jdN^{\mu/2})\right)\\
=&\mu N^{\mu}\sum_{j=1}^d\left(F_jd\bar F_j-\bar F_jdF_j\right)=\mu N^\mu\left(\partial \log N-\bar \partial \log N\right)\\
=&\mu N^{\mu-1}\left(\partial  N-\bar \partial  N\right)=\partial  N^\mu-\bar \partial  N^\mu.
\end{split}\nonumber
\end{equation}
\end{proof}


Now we can proceed with the proof of (A).
In \cite[Theorem 1.1]{loiscala}, A. Loi and A. Di Scala show that the holomorphic map $F\!:(\Omega, \omega_{hyp}) \rightarrow (\mathds C^n, w_0)$ defined by:
$$
F(z)=\operatorname{B}_\Omega(z, \ov z)^{-\frac{1}{4}} z, 
$$
is a global symplectomorphism. Thus, by Lemma \ref{darbouxh} and Lemma \ref{raffaello}, in order to prove (A) we need only to check that such $F$ satisfies \eqref{la19}.

Denote by $\operatorname{D}(x,y)$ the operator on $(V,\{,,\})$ defined by $\operatorname D(x,y)z=\{x,y,z\}$
and denote by $z=\sum_{j}\lmb_j c_j$ the spectral decomposition of $z$. We have (see \cite[(28)]{loiscala}) that:
\begin{equation*}\begin{split} 
 F(z)=\left(\mathrm{id}-\frac{1}{2} \operatorname{D}(z, z)\right)^{-1 / 2} z=\left(\mathrm{id}-z \square z\right)^{-1 / 2}\, z
\end{split}\end{equation*}
where we use the operator $z \square z:=\frac{1}{2} \operatorname{D}(z, z)$. Therefore,
\begin{equation*}\begin{split} 
&\qquad \sum_{j=1}^d\left[ F_jd\ov F_j-\ov F_jdF_j\right] = \, m_1\left(F,dF\right) - \, m_1\left(dF,F\right)\\
&=   m_1 \left(\left(\mathrm{id}-z \square z\right)^{-1 / 2}\, z
, d\left(\left(\mathrm{id}-z \square z\right)^{-1 / 2}\, z
\right)\right)
-  m_1 \left(d\left(\left(\mathrm{id}-z \square z\right)^{-1 / 2}\, z
\right),\left( \mathrm{id}-z \square z\right)^{-1 / 2}\, z\right)\\
&=    m_1 \left(\left(\mathrm{id}-z \square z\right)^{-1 / 2}\, z
, \left(\mathrm{id}-z \square z\right)^{-1 / 2}\, dz
\right)
-  m_1 \left(\left(\mathrm{id}-z \square z\right)^{-1 / 2}\, dz
,\left( \mathrm{id}-z \square z\right)^{-1 / 2}\, z\right),
\end{split}\end{equation*}
where we used the identity:
\[
\mathrm{d} F(z)=\left(\mathrm{d}(\mathrm{id}-z \square z)^{-1 / 2}\right) z+(\mathrm{id}-z \square z)^{-1 / 2}\mathrm{d} z,
\]
and the fact that $z \square z$ is self-adjoint with respect to the Hermitian metric $m_{1}$. Using \cite[(34)]{loiscala} we get:
$$
m_{1}\left(F(z),\left((\mathrm{id}-z \square z)^{-1 / 2}\right) \mathrm{d} z\right)=-\frac{\ov{\partial} {N(z, \ov  z)}}{{N(z, \ov  z)}},
$$
and thus:
\begin{equation*}\begin{split} 
 \de \log N(z, \ov  z)&- \deb \log N(z,\ov z)=\frac{{\partial} {N(z,\ov z)}}{{N(z,\ov z)}}-\frac{{\ov\partial} {N(z,\ov z)}}{{N(z,\ov z)}}\\
 & = m_{1}\left(F(z),\left((\mathrm{id}-z \square z)^{-1 / 2}\right) \mathrm{d} z\right) - m_{1}\left(\left((\mathrm{id}-z \square z)^{-1 / 2}\right) \mathrm{d} z ,F(z)\right),
\end{split}\end{equation*}
as wished.

In order to prove (B), 
we need the following lemma:
\begin{lemma}\label{secHP}
Property (B) holds for Hartogs--Polydiscs.
\end{lemma}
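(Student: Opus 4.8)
The plan is to prove that $\Psi_{\Delta^n,\mu}$ is a diffeomorphism by writing down an explicit smooth inverse. The first step is to make the map completely explicit on the polydisc. Since $\Delta^n$ is a product of discs, both the generic norm and the Bergman operator factor: $N_{\Delta^n}=\prod_{j=1}^n(1-|z_j|^2)$, while $\operatorname{B}_{\Delta^n}(z,\ov z)$ is diagonal with entries $(1-|z_j|^2)^2$, so that $\operatorname{B}_{\Delta^n}(z,\ov z)^{-1/4}z=\big(z_j/\sqrt{1-|z_j|^2}\big)_{j=1}^n$. Writing $F_j(z_j)=z_j/\sqrt{1-|z_j|^2}$ for the one-variable Di Scala--Loi symplectomorphism of the disc (the rank-one case of \cite{loiscala}, a diffeomorphism of $\Delta$ onto $\C$ with inverse $F_j^{-1}(u)=u/\sqrt{1+|u|^2}$), the map $\Psi_{\Delta^n,\mu}(z,w)=(\zeta_1,\dots,\zeta_n,\zeta_{n+1})$ becomes
\[
\zeta_j=\frac{\sqrt{\mu N_{\Delta^n}^\mu}}{\sqrt{N_{\Delta^n}^\mu-|w|^2}}\,F_j(z_j)\quad(j\le n),\qquad \zeta_{n+1}=\frac{w}{\sqrt{N_{\Delta^n}^\mu-|w|^2}}.
\]

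Second, I would read off from the last coordinate that $|\zeta_{n+1}|^2=|w|^2/(N_{\Delta^n}^\mu-|w|^2)$, hence $N_{\Delta^n}^\mu-|w|^2=N_{\Delta^n}^\mu/(1+|\zeta_{n+1}|^2)$. Substituting this into the formula for $\zeta_j$ produces the crucial decoupling
\[
\zeta_j=\sqrt{\mu(1+|\zeta_{n+1}|^2)}\;F_j(z_j),
\]
in which the factor $c:=\sqrt{\mu(1+|\zeta_{n+1}|^2)}$ depends on $\zeta_{n+1}$ alone. Inverting each $F_j$ then gives $z_j=\zeta_j/\sqrt{c^2+|\zeta_j|^2}$, whence $1-|z_j|^2=c^2/(c^2+|\zeta_j|^2)$ and therefore $N_{\Delta^n}=c^{2n}/\prod_{j=1}^n(c^2+|\zeta_j|^2)$. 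With $N_{\Delta^n}$ now expressed through $\zeta$ alone, the relation $w=\zeta_{n+1}\sqrt{N_{\Delta^n}^\mu/(1+|\zeta_{n+1}|^2)}$ recovers $w$.

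Third, I would package these formulas into a map $\C^{n+1}\f M_{\Delta^n,\mu}$ and verify it is a two-sided inverse of $\Psi_{\Delta^n,\mu}$. Both compositions reduce to the one-variable identities $F_j\circ F_j^{-1}=\mathrm{id}$ together with the bookkeeping of the scalar factors already carried out. It remains to confirm that the constructed point genuinely lies in $M_{\Delta^n,\mu}$, i.e. $|z_j|<1$ and $|w|^2<N_{\Delta^n}^\mu$; both are immediate from $|z_j|^2=|\zeta_j|^2/(c^2+|\zeta_j|^2)<1$ and $|w|^2=N_{\Delta^n}^\mu|\zeta_{n+1}|^2/(1+|\zeta_{n+1}|^2)<N_{\Delta^n}^\mu$.

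The only point requiring care---and the main (mild) obstacle---is smoothness of the inverse across all of $\C^{n+1}$, in particular along the loci $\zeta_j=0$ and $\zeta_{n+1}=0$ where one might fear the square roots misbehave. Here the decoupling pays off: because $c^2=\mu(1+|\zeta_{n+1}|^2)\ge\mu>0$, every denominator $\sqrt{c^2+|\zeta_j|^2}$ is bounded below by $\sqrt{\mu}$, and $N_{\Delta^n}$ is a strictly positive smooth function of $\zeta$, so all radicands are strictly positive and the inverse is manifestly $C^\infty$. Being a smooth bijection with smooth inverse, $\Psi_{\Delta^n,\mu}$ is a diffeomorphism, which is precisely property (B) for Hartogs--polydiscs.
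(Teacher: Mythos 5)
Your argument is correct, but it proves the lemma by a genuinely different route from the paper. The paper does not construct an inverse: it invokes the Loi--Zuddas criterion \cite[Th. 1.1]{loizuddas} for maps induced by rotation-invariant potentials, writing $\varphi_{\Delta^n,\mu}$ as a function $\wi\varphi_{\Delta^n,\mu}(x_1,\dots,x_n,y)$ of $x_j=|z_j|^2$, $y=|w|^2$, and checking that $\partial\wi\varphi_{\Delta^n,\mu}/\partial x_j>0$, $\partial\wi\varphi_{\Delta^n,\mu}/\partial y>0$, together with the divergence of $\sum_j x_j\,\partial\wi\varphi_{\Delta^n,\mu}/\partial x_j+y\,\partial\wi\varphi_{\Delta^n,\mu}/\partial y$ at $\partial M_{\Delta^n,\mu}$ (this last condition is what forces surjectivity onto $\C^{n+1}$). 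You instead exploit the product structure of the polydisc to get the decoupling $\zeta_j=\sqrt{\mu(1+|\zeta_{n+1}|^2)}\,F_j(z_j)$ and write down an explicit global smooth inverse; I verified the formulas ($N^\mu-|w|^2=N^\mu/(1+|\zeta_{n+1}|^2)$, $1-|z_j|^2=c^2/(c^2+|\zeta_j|^2)$, the two compositions, and the positivity of all radicands since $c^2\geq\mu>0$), and they are all correct. Your approach is more elementary and self-contained --- it needs no external criterion and delivers surjectivity for free since the inverse is defined on all of $\C^{n+1}$ --- whereas the paper's approach is shorter given the cited theorem and is reused almost verbatim for the dual map in the proof of Lemma \ref{biprimo}. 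Note only that neither you nor the criterion-based argument needs the symplectic property of $F_j$ here; for property (B) the elementary fact that $u\mapsto u/\sqrt{1-|u|^2}$ is a diffeomorphism of $\Delta$ onto $\C$ suffices, as you in effect use.
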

\begin{proof}
Let $M_{\Delta^n,\mu}$ be an $n$-dimensional Hartogs--polydisc, as described in Example \ref{hartogspoly}. 
Recall that (see e.g. \cite[Sec. 3]{loiscala}) the HPJTS associated to the polydisc $\Delta^n$ is $\C^n$ equipped with the triple product $\left\{z,u,w\right\}=j^{-1}\left(j(z)j(u)^*j(w)+j(w)j(u)^*j(z)\right)$, its Bergman operator is $\operatorname{B}(u, v) w=j^{-1}\left(\left(I_{n}-j(u) j(v)^{*}\right) j(w)\left(I_{n}-j(v)^{*} j(u)\right)\right)$ and the flat \K\ form is given by:
$$
\w_0=\frac{i}{2} \de\deb \tr j(z)j(z)^* = \frac{i}{2} \de\deb |z|^2.
$$
Thus \eqref{eqPsi} reads:
{\small\begin{equation}\label{eqPsi0} 
\Psi_{\Delta^n}(z,w)=\frac{1}{\sqrt{  \prod_{j=1}^n(1-|z_j|^2)^\mu-|w|^2}}\left(\sqrt{\mu\prod_{j=1}^n(1-|z_j|^2)^{\mu}} \left(\frac{z_1}{\sqrt{1-|z_1|^2}},\dots,\frac{z_n}{\sqrt{1-|z_2|^2}}\right),{ w}\right).
\end{equation}}
Let:
$$
\wi{M}=\left\{\left(x_{1}, \ldots, x_{n},y\right) \in \mathbb{R}^{n+1}\midd x_{j}=\left| z_{j}\right|^{2}, \, y=\left| w \right|^{2},\, z=\left(z_{1}, \ldots z_{n},w\right) \in M_{\Delta^n}\right\},
$$
and consider the smooth map $\wi\varphi_{\Delta^n,\mu}:\wi M \f \R$, defined by:
\begin{equation}\begin{split} 
\tilde\varphi_{\Delta^n,\mu}(x_1,\dots, x_n,y):=\varphi_{\Delta^n,\mu}(|z_1|^2,\dots, |z_n|^2,|w|^2)=-\log \left(\prod_{j=1}^{n}\left(1-x_j\right)^{\mu}-y\right).
\end{split}\nonumber\end{equation}
By \cite[Th. 1.1]{loizuddas}, $\Psi_{\Delta^n,\mu}(z,w)$ is a global symplectomorphism (in particular a diffeomorphism) if $\frac{\partial \wi\varphi_{\Delta^n,\mu}}{\partial x_{k}}>0$, $\frac{\partial \wi\varphi_{\Delta^n,\mu}}{\partial y}>0$ and $\lim _{(x,y) \rightarrow \partial M} \sum_{j=1}^{n} \frac{\partial \wi\varphi_{\Delta^n,\mu}}{\partial x_{j}} x_{j}+\frac{\partial \wi\varphi_{\Delta^n,\mu}}{\partial y}y=+\infty$. The first two conditions are easily checked:
$$
\frac{\partial \wi \varphi_{\Delta^n,\mu}}{\partial x_j} =\frac{\mu\prod_{j=1}^n(1-x_j)^{\mu}}{(1-x_j)\left( \prod_{j=1}^n(1-x_j)^\mu-y\right)}>0,
$$
$$
\frac{\partial \tilde\varphi_{\Delta^n,\mu}}{\partial y} =\frac{1}{ \prod_{j=1}^n(1-x_j)^\mu-y}>0.
$$
It remains to verify the third condition:
\begin{equation}\label{globalcond}
\lim_{(x,y)\rightarrow \partial M_{\Delta^n,\mu}}\left( \frac{\mu\prod_{k=1}^n(1-x_k)^{\mu}}{\left( \prod_{k=1}^n(1-x_k)^\mu-y\right)}\sum_{j=1}^n\frac{x_j}{1-x_j}+\frac{y}{ \prod_{j=1}^n(1-x_j)^\mu-y}\right)=+\infty.
\end{equation}
Observe that $\partial M_{\Delta^n,\mu}=\partial \Delta^n\cup \{y=\prod_{k=1}^n(1-x_k)^{\mu}\} $, thus \eqref{globalcond} is satisfied since for any $l=1,\dots, n$:
$$
\lim_{x_l\rightarrow 1}\left( \frac{\mu\prod_{k=1}^n(1-x_k)^{\mu}}{\left( \prod_{k=1}^n(1-x_k)^\mu-y\right)}\sum_{j=1}^n\frac{x_j}{1-x_j}+\frac{y}{ \prod_{j=1}^n(1-x_j)^\mu-y}\right)=+\infty,
$$
and 
$$
\lim_{y\rightarrow \prod_{k=1}^n(1-x_k)^{\mu}}\left( \frac{\mu\prod_{k=1}^n(1-x_k)^{\mu}}{\left( \prod_{k=1}^n(1-x_k)^\mu-y\right)}\sum_{j=1}^n\frac{x_j}{1-x_j}+\frac{y}{ \prod_{j=1}^n(1-x_j)^\mu-y}\right)=+\infty,
$$
concluding the proof.
\end{proof}
We can proceed with the proof of (B). Let us consider a Cartan domain $\W$ of rank $r$ and let 
$$
z=\lambda_{1} c_{1}+\lambda_{2} c_{2}+\cdots+\lambda_{r} c_{r},
$$
be a spectral decomposition of $z \in \W$. Since:
$$
\operatorname{B}_\Omega(z, \ov z) c_{j}=\left(1-\lambda_{j}^{2}\right)^{2} c_{j} 
$$
and 
$$
N_\Omega(z,\ov z)=\prod_{j=1}^r(1-\lmb^2),
$$
(see \cite{roos}, and also \cite{loiscala}) we can write $\Psi_{\W,\mu}$ as follows:
\begin{equation}\begin{split} \label{eqPsij}
 \Psi_{\Omega,\mu}(z,w)=\frac{1}{\sqrt{\prod_{j=1}^r \left(1-\lmb^2\right)^\mu-|w|^2}}\left(\sqrt{\mu \prod_{j=1}^r \left(1-\lmb^2\right)^\mu}\sum_{j=1}^{r} \frac{\lambda_{j}}{\left(1-\lambda_{j}^{2}\right)^{1 / 2}} c_{j}, w\right).
\end{split}\end{equation}
Comparing  \eqref{eqPsij} with \eqref{eqPsi0} and using Lemma \ref{secHP}, we deduce that $ \Psi_{\Omega,\mu}$ is a diffeomorphism (we apply \cite[Section 1.6]{dilr}).

\medskip

\begin{remark}[Hereditary property]\label{heredrem}\rm
Observe that the map $\Psi_{\Omega,\mu}$ is hereditary in the following sense:
for any bounded symmetric domain $\Omega'\subset \C^m$  complex and totally geodesic embedded  $\Omega' \stackrel{f}{\hookrightarrow} \Omega$, such that $f(0)=0$, one has:
 $$
 \Psi_{\Omega',\mu}(z,w)={\Psi_{\Omega,\mu}}(f(z),w).
 $$
Indeed, consider a complex and totally geodesic embedded submanifold $f\!:\Omega' {\hookrightarrow} \Omega$, satisfying $f(0)=0$. By Prop. \ref{propext}, $f$ lifts to a \K\ embedding  $\tilde f\!:M_{\Omega'} {\hookrightarrow} M_{\Omega}$, defined by $\wi f (z,w)=(f(z),w)$. By \cite[Prop. 2.2]{loiscala}, $f$ preserves the triple products, thus it follows that it preserves also the Bergman operator $\operatorname{B}_\Omega$ and the generic norm $N_\Omega$. Hence $\Psi_{\Omega^{\prime},\mu}={{\Psi_{\Omega,\mu}}}(f(z),w)$.
\end{remark}

\begin{remark}\label{autrem}\rm
The map $\Psi_{\Omega,\mu}$ commutes with the holomorphic and isometric action \eqref{eqautac} of the isotropy group $K \subset \operatorname{Aut}(\Omega)$ at the origin, i.e. for every $\tau \in K$, $\Psi_{\Omega,\mu} \circ \tau=\tau \circ \Psi_{\Omega,\mu}$.
This follows since $K = \operatorname{Aut}(\C^n, \left\{,,\right\}_\W)$, and therefore: 
\begin{equation*}\begin{split} 
 \Psi_{\Omega,\mu} \circ \tau \left(z,w\right)&=\frac{1}{\sqrt{N^\mu_\Omega(\tau(z),\ov {\tau(z)})-|w|^2}}\left(\sqrt{\mu N^\mu_\Omega(\tau(z),\ov{\tau(z)})}\,\operatorname{B}_\Omega(\tau(z),\ov {\tau(z)})^{-\frac{1}{4}}\tau(z), w\right)\\
 &= \frac{1}{\sqrt{N_{\Omega}^{\mu}(z, \bar{z})-|w|^{2}}}\left(\sqrt{\mu N_{\Omega}^{\mu}(z, \bar{z})} \,\operatorname{B}_\Omega(z, \ov z)^{-\frac{1}{4}}\tau( z), w\right)\\
 & = \tau \circ \Psi_{\Omega,\mu}  \left(z,w\right).
\end{split}\end{equation*}
\end{remark}

\begin{remark}[Alterative proof of Theorem \ref{thmmain} for classical Cartan-Hartogs]\rm
It is possible to give a more geometric proof of (A) for Cartan--Hartogs domains based on Cartan domains of classical type, without using the Jordan triple system theory. 
A direct computation proves \eqref{la19} for the Cartan--Hartogs $M_{D_I,\mu}$ based on the first classical domain $D_I$ (see e.g. \cite{hua}). 
 It is known that a HSSNT $\W$ admits a complex and totally geodesic embedding $f$ in $D_{I}[m,m],$ for $m$ sufficiently large. (This is obviously true for the domains $D_{I}, D_{I I}$ and $D_{I I I},$ while for the domain $D_{I V}$ -- associated to the so-called Spin-factor -- the explicit embedding can be found in \cite{crim}.) By Proposition \ref{propext}, $f$ lifts to  a complex and totally geodesic embedding $\wi f$ of $M_\W$ in $M_{D_{I}[n],\mu}$. We can assume that this embedding takes the origin $0 \in M$ to the origin $0 \in D_{I}[n]$. 
Hence property (A) for $M_{\W,\mu}$ is a consequence of the Hereditary property
given in Remark \ref{heredrem} and the fact that (A) holds true for $D_{I}[m,m]$.
\end{remark}

\section{Dual Cartan--Hartogs domains}\label{dualsection}
\subsection{Definition and geometric properties} We define the {\em dual Cartan--Hartogs domain} $M^*_{\Omega,\mu}$ as $\C^{n+1}$, equipped with the dual K\"ahler form (see Lemma \ref{lemswkahl}):
\begin{equation}\label{eqdualkf}\begin{split} 
\sw_{\W,\mu}=\frac{i}{2}\de\deb \varphi^*_{\Omega,\mu},  
\end{split}\end{equation}
where $ \varphi^*_{\Omega,\mu}:= \log(N^\mu_{\W}(z,- \ov  z) + |w|^2)$ is the dual K\"ahler potential  
(see the introduction for the definition of symplectic dual). If we restrict $\omega^* _{\W,\mu}$ to $\C^{n}=\left\{(z,0)\in \C^{n+1}\right\}$ we get a multiple of the \K\ form dual to the hyperbolic form \eqref{eqhypd}, i.e.:
$$
\omega^*_{{\Omega,\mu}_{\mid_{\C^n}}}={\mu}\,\w^*_{hyp}.
$$

\begin{example}[Dual Hartogs--polydisc]\label{hartogspolydual}\rm
Consider the Hartogs--polydisc $M_{\Delta^n,\mu}$ of the previous example. Then, by \eqref{eqpolipot} and \eqref{eqdualkf}, the dual form on $\C ^{n+1}$ is given by $\w_{\Delta^n,\mu}^*=\frac{i}{2} \de\deb \varphi^*_{\Delta^n,\mu}$, where:
$$
 \varphi^*_{\Delta^n,\mu}(z,w) = \log\left( \prod_{j=1}^n(1+|z_j|^2)^\mu+|w|^2\right).
$$
\end{example}

In general the dual of a \K\ form is not defined (see \cite[Example 1.3]{lmbdual}), the following lemma assures us that $\sw_{\W,\mu}$ is a \K\ metric on $\C^{n+1}$.

\begin{lemma}\label{lemswkahl}
The function $\varphi^*_{\Omega,\mu}\!:\mathds C^{n+1}\rightarrow \mathds R$ is strictly plurisubharmonic.
\end{lemma}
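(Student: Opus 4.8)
The plan is to show that $\varphi^*_{\Omega,\mu}(z,w)=\log\bigl(N^\mu_{\W}(z,-\ov z)+|w|^2\bigr)$ is strictly plurisubharmonic on $\C^{n+1}$ by exhibiting its complex Hessian as a positive-definite matrix. The decisive structural observation is that $N^\mu_{\W}(z,-\ov z)$ is the analytic continuation obtained from $N^\mu_{\W}(z,\ov z)$ by the substitution $\ov z\mapsto-\ov z$, which on the dual compact Hermitian symmetric space replaces the noncompact generic norm by its compact counterpart. Since $\W^*$ (the compact dual) is a genuine \K\ manifold with positive-definite Bergman-type metric, I would first argue that $-\log N_{\W}(z,-\ov z)$, equivalently $\log N_{\W}(z,-\ov z)$ with the appropriate sign, has definite complex Hessian on all of $\C^n$. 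Concretely, using the spectral decomposition $z=\sum_j\lambda_j c_j$ as in \eqref{eqPsij}, one has $N_{\W}(z,-\ov z)=\prod_{j=1}^r(1+\lambda_j^2)$, and the relevant potential restricted to a polydisc slice becomes $\log\prod_j(1+|z_j|^2)^\mu$, matching Example \ref{hartogspolydual}.

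First I would set $G:=N^\mu_{\W}(z,-\ov z)$ and compute $\de\deb\varphi^*_{\Omega,\mu}$ directly. Writing $\varphi^*_{\Omega,\mu}=\log(G+|w|^2)$ and differentiating,
\begin{equation*}
\de\deb\log(G+|w|^2)=\frac{\de\deb G+dw\wedge d\ov w}{G+|w|^2}-\frac{(\de G+\ov w\,dw)\wedge(\deb G+w\,d\ov w)}{(G+|w|^2)^2},
\end{equation*}
so the claim reduces to showing the associated Hermitian form is positive definite. The $w$-direction contributes positively because $|w|^2$ is plurisubharmonic and $G>0$ everywhere (the substitution $\ov z\mapsto-\ov z$ turns the zeros of $N_\W$ into a nowhere-vanishing positive function on $\C^n$). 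The $z$-block requires that $\de\deb G$, together with the correction term, stays positive; this is exactly where the compact-dual structure enters, since $\log G=\mu\log N_\W(z,-\ov z)$ is (up to the factor $\mu$) the \K\ potential of the symmetric metric on the compact dual, hence strictly plurisubharmonic.

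The cleanest route, and the one I expect to work, is to exploit the Polydisc Theorem together with the hereditary/$K$-invariance already established for $\Psi_{\Omega,\mu}$. By the decomposition $M_{\Omega,\mu}=\cup_{\tau\in K}\tau(M_{\Delta^r,\mu})$ and the $K$-equivariance of the generic norm, it suffices to verify strict plurisubharmonicity of $\varphi^*_{\Omega,\mu}$ along the polydisc slice and then propagate it by the transitive action of $K$ on the spectral frames. On the slice, $\varphi^*_{\Delta^r,\mu}(z,w)=\log\bigl(\prod_{j=1}^r(1+|z_j|^2)^\mu+|w|^2\bigr)$ from Example \ref{hartogspolydual}, whose complex Hessian is an explicit, manifestly positive-definite matrix: a diagonal computation in the $z_j$ variables gives positive entries from $\frac{\mu}{1+|z_j|^2}$ contributions, and the Schur-complement argument handles the mixing with the $w$-variable.

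The main obstacle will be controlling the off-diagonal coupling between the $z$-block and the $w$-direction in the rank-one-correction term $\frac{(\de G+\ov w\,dw)\wedge(\deb G+w\,d\ov w)}{(G+|w|^2)^2}$; I expect to dispatch it by a Schur-complement (or Cauchy--Schwarz) estimate showing that the subtracted rank-one form is dominated by the positive leading term, precisely because $G+|w|^2>0$ and the leading Hessian already controls $\de G\otimes\deb G/G$. The subtlety is to ensure positivity holds on all of $\C^{n+1}$ and not merely on the domain $M_{\Omega,\mu}$, but this is automatic here since replacing $\ov z$ by $-\ov z$ removes the boundary singularity of $N_\W$, making $G$ smooth and positive globally.
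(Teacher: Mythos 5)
Your proposal is correct and follows essentially the same route as the paper: reduce to the dual Hartogs--polydisc via the (dual) Polydisc Theorem and the pullback identity for totally geodesic immersions, then verify positive definiteness of the explicit Hessian of $\log\bigl(\prod_j(1+|z_j|^2)^\mu+|w|^2\bigr)$ by controlling the $z$--$w$ coupling. The paper's only cosmetic difference is that it splits the $z$-block as a positive diagonal matrix plus a rank-one term $VV^*$ (so it is not purely diagonal, as you suggest) and then concludes by computing $\det(H)$ explicitly, which is equivalent to your Schur-complement criterion.
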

\begin{proof}
To shorten the notation let us write $\tilde N$ for $N_\Omega(z,-\ov z)$. 
 The hessian of $\varphi^*_{\Omega,\mu}$ is given by:
$$
H:=\frac{1}{\left(\tilde N^\mu+|w|^2\right)^2}\left(\begin{array}{c|c}
\left(\tilde N^\mu+|w|^2\right)\partial_j\bar\partial_k\tilde N^\mu-\partial_j\tilde N^\mu\bar \partial_k\tilde N^\mu&-w\partial_j\tilde N^\mu\\
\hline
-\bar w\bar\partial_k\tilde N^\mu&\tilde N^\mu
\end{array}\right).
$$
Observe that it is enough to show that $H$ is positive definite when $\Omega=\Delta^n$. Indeed, let $(\C^r, \w^*_{\Delta})$ be the symplectic dual of $(\Delta, \w_{hyp})$, i.e. $\w^*_{\Delta}=\frac{i}{2}\de\deb \log \left(\prod_{j=1}^r\left(1+|z_j|^2\right)\right)$, and let $(z_0,w_0)\in \C ^{n+1}$, $(u,\xi)\in T _{(z_0,w_0)}\C^{n+1}$. By the dual Polydisk Theorem, there exists a totally geodesic holomorphic immersion $f:(\C^r, \w^*_{\Delta}) \f \left(\C^n, \w^*_{hyp}\right)$ such that $f(\wi z_0)=z_0$ e $f_{*,\wi z_0}(\wi u)=u$, for suitable $\wi z_0 \in \C ^n$ and $\wi u \in T_{\wi z_0} \C^r$. Then by \eqref{eqpullbw} below:
$$
g_{\Delta,\mu}^*\left(\left(\wi u, \xi\right),\left(\wi u, \xi\right)\right)=g_{\W,\mu}^*\left(\wi f _{*,\left(z_0,w_0\right)}\left(\wi u, \xi\right),\wi f _{*,\left(z_0,w_0\right)}\left(\wi u, \xi\right)\right)=g_{\W,\mu}^*\left(\left( u, \xi\right),\left( u, \xi\right)\right)
$$
where $\wi f \left(z,w\right)=\left(f \left(z\right),w\right)$. 
Thus, consider a dual Hartogs--polydisk of dimension $n+1$. Then $\tilde N^\mu=\prod_{h=1}^n(1+|z_h|^2)^\mu$ and thus for $j, k=1,\dots, n$:
\begin{equation}
\begin{split}
\bar\partial_k\prod_{h=1}^n(1+|z_h|^2)^\mu
=&\frac{\mu\, z_k\,\prod_{ h=1}^n(1+|z_h|^2)^\mu}{1+|z_k|^2},
\end{split}\nonumber
\end{equation}
%
$$
\partial_j\bar\partial_k\prod_{h=1}^n(1+|z_h|^2)^\mu=\frac{\mu\,\prod_{ h=1}^n(1+|z_h|^2)^\mu}{(1+|z_k|^2)(1+|z_j|^2)}\left({ {\delta_{jk}}}+\mu\,z_k\bar z_j\right).
$$
Thus:
\begin{equation}
\begin{split}
\left(\tilde N^\mu+|w|^2\right)\partial_j\bar\partial_k\tilde N^\mu-\partial_j\tilde N^\mu\bar \partial_k\tilde N^\mu
=&\left(\prod_{ h=1}^n(1+|z_h|^2)^\mu+|w|^2\right)\frac{\mu\,\prod_{ h=1}^n(1+|z_h|^2)^\mu}{(1+|z_k|^2)(1+|z_j|^2)}\delta_{jk}+\\
&+\frac{\mu^2\,z_k\bar z_j\,|w|^2\prod_{ h=1}^n(1+|z_h|^2)^\mu}{(1+|z_k|^2)(1+|z_j|^2)}
\end{split}\nonumber
\end{equation}
Setting:
$$
A:=\left(\prod_{ h=1}^n(1+|z_h|^2)^\mu+|w|^2\right)\mu\,\prod_{ h=1}^n(1+|z_h|^2)^\mu\left(\begin{array}{ccc}\frac{1}{(1+|z_1|^2)^2}&&\\
&\ddots&\\
&&\frac{1}{(1+|z_n|^2)^2}\end{array}\right)
$$
and:
$$
B:=\mu^2\, \prod_{ h=1}^n(1+|z_h|^2)^\mu VV^*,
$$
where $V$ is the column vector with $k$-th entry $\frac{z_k}{1+|z_k|^2}$, the Hessian $H$ reads:
$$
H:=\frac{1}{\left( \prod_{ h=1}^n(1+|z_h|^2)^\mu+|w|^2\right)^2}\left(\begin{array}{c|c}
A+ B&-w\frac{\mu\, \bar z_j\,\prod_{ h=1}^n(1+|z_h|^2)^\mu}{1+|z_j|^2}\\
\hline
-\bar w\frac{\mu\, z_k\,\prod_{ h=1}^n(1+|z_h|^2)^\mu}{1+|z_k|^2}&\prod_{ h=1}^n(1+|z_h|^2)^\mu
\end{array}\right),
$$
and since $A+B$ is positive definite (being the sum of a positive definite matrix $A$ and a semipositive one $B$), $H$ is positive definite iff its determinant is. A long but straightforward computation gives:
$$
\det(H)=\mu^n\frac{\prod_{h=1}^n(1+|z_h|^2)^{\mu(n+1)-2}}{\left(\prod_{h=1}^n(1+|z_h|^2)^\mu+|w|^2\right)^{n+2}},
$$
and we are done.
\end{proof}

\begin{remark}\rm
Observe that it turns out (see \cite[Subsec. 2.4]{loiscala}) that the Hermitian  symmetric space of compact type $\left(\SW,\w_{FS}\right)$  dual to $\left(\W,\w_{hyp}\right)$ is a compactification of $\left(\C^n, \w^*_{hyp}\right)$. Further, $\left\{\left(z,w \right)\in M^* _{\W,\mu}  \mid  z=0\right\}$ is totally geodesic in $ M^* _{\W,\mu} $ and has $\C{\rm P}^1$ equipped with the Fubini--Study metric as compactification, therefore $M^* _{\W,\mu}$ is not complete for any $\mu$. 
The authors believe that $\left(\C^{n+1}, \sw_{\W,\mu} \right)$ admits a completion only when $M_{\Omega,\mu}$ is itself a Hermitian  symmetric space of noncompact type, which actually happens only when it reduces to be the $(n+1)$-dimensional complex hyperbolic space, i.e. when $\mu=1$ and $\rank(\W)=1$. 
\end{remark}

Using \eqref{francesca} and the following result by A. Selberg \cite{selberg}:
\begin{equation}\label{selberg}
\begin{split}
F(s)=&\mathop{\int\dots \int}_{1>\lmb_1 >\dots>\lmb_r>0} \ \prod_{j=1}^r\left(1-\lmb_j^2\right)^s \prod_{j=1}^r \lmb_j^{2b+1} \prod_{1\leq j<k\leq r} (\lmb_j^2 - \lmb_k^2)^a  \ d \lmb_1 \wedge \dots \wedge d \lmb_r \\
=&\frac1{2^r}\mathop{\int\dots \int}_{1>\lmb_1 >\dots>\lmb_r>0} \ \prod_{j=1}^r\left(1-t_j\right)^s \prod_{j=1}^r t_j^{b} \prod_{1\leq j<k\leq r} (t_j - t_k)^a  \ dt_1 \wedge \dots \wedge d t_r.
\end{split}
\end{equation}
we have the following:
\begin{lemma}\label{volume}
The volume of a $(n+1)$-dimensional dual Cartan--Hartogs domain $\left(\C^{n+1},\w^*_{\W,\mu}\right)$ is given by:
$$
\operatorname {Vol}\left(\C^{n+1},\w^*_{\W,\mu}\right)
=\frac{\pi^{n+1}\mu^n}{(n+1)!}F(0) \int_\mathcal{F} \Theta,
$$
where $\Theta$ is the induced volume form on F\"urstenberg-Satake boundary $\mathcal{F}$ of $\W$.
\end{lemma}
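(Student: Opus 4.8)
The plan is to express the volume as the integral over $\C^{n+1}$ of the Monge--Ampère determinant of the dual potential, to carry out the $w$--integration explicitly, and then to reduce the remaining integral over $\C^n$ to the spectral (Selberg) integral \eqref{selberg} that already computed the bounded case. Since $\w^*_{\W,\mu}=\frac i2\partial\bar\partial\varphi^*_{\Omega,\mu}$, the associated volume form is $\det H$ times the Euclidean volume $\omega_0^{n+1}/(n+1)!$, where $H$ is the Hessian written down in the proof of Lemma \ref{lemswkahl}, so that
$$
\Vol\left(\C^{n+1},\w^*_{\W,\mu}\right)=\int_{\C^{n+1}}\det H\,\frac{\omega_0^{n+1}}{(n+1)!}.
$$
First I would simplify $\det H$. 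Writing $P=\tilde N^\mu$ and taking the Schur complement with respect to the $(w,\bar w)$ entry $P/(P+|w|^2)^2$, the cross terms recombine and, using $\partial_j\bar\partial_k\log P=\tfrac1P\bigl(\partial_j\bar\partial_kP-\tfrac1P\partial_jP\,\bar\partial_kP\bigr)$, the determinant factorizes as
$$
\det H=\frac{P^{\,n+1}}{\left(P+|w|^2\right)^{n+2}}\,\det\!\left(\partial_j\bar\partial_k\log P\right)=\mu^n\,\frac{\tilde N^{\,\mu(n+1)}}{\left(\tilde N^\mu+|w|^2\right)^{n+2}}\,\det\!\left(\partial_j\bar\partial_k\log\tilde N\right),
$$
the last equality because $\log P=\mu\log\tilde N$ so the $n\times n$ determinant picks up $\mu^n$.

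The key geometric input is the Monge--Ampère identity $\det\!\left(\partial_j\bar\partial_k\log\tilde N\right)=\tilde N^{-g}$ on $\C^n$, where $g=(r-1)a+b+2$ is the genus of $\W$. This is the dual form, via the formal substitution $\bar z\mapsto-\bar z$ (under which the two factors of $(-1)^n$ cancel), of the \K--Einstein identity $\det\!\left(-\partial_j\bar\partial_k\log N\right)=N^{-g}$ for the hyperbolic metric of $\W$; the multiplicative constant is normalized to $1$ by evaluating at the origin, where $\tilde N(0)=1$ and $\partial_j\bar\partial_k\log\tilde N(0)=\delta_{jk}$ in the Harish--Chandra coordinates identifying $\omega_0=\frac i2\partial\bar\partial m_1$. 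Granting this, I integrate out $w$ with $t=|w|^2$:
$$
\int_\C\det H\,\tfrac i2\,dw\wedge d\bar w=\pi\mu^n\,\tilde N^{\,\mu(n+1)-g}\int_0^{\infty}\frac{dt}{\left(\tilde N^\mu+t\right)^{n+2}}=\frac{\pi\mu^n}{n+1}\,\tilde N^{-g},
$$
so that $\Vol\left(\C^{n+1},\w^*_{\W,\mu}\right)=\frac{\pi\mu^n}{n+1}\int_{\C^n}\tilde N^{-g}\,\frac{\omega_0^n}{n!}$.

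It remains to evaluate $\int_{\C^n}\tilde N^{-g}\,\omega_0^n$. Here I would use the spectral decomposition $z=\sum_j\lambda_j c_j$ exactly as in \eqref{selberg}, but now with the eigenvalues ranging over $(0,\infty)$ since the integration is over all of $\C^n$; the F\"urstenberg--Satake factor $\int_{\mathcal F}\Theta$ and the Jacobian $\prod_j\lambda_j^{2b+1}\prod_{j<k}(\lambda_j^2-\lambda_k^2)^a$ are the same as in the bounded case. Since $\tilde N=\prod_j(1+\lambda_j^2)$ on the eigenvalues, this gives
$$
\int_{\C^n}\tilde N^{-g}\,\omega_0^n=\pi^n\left(\int_{\mathcal F}\Theta\right)\int_{\infty>\lambda_1>\dots>\lambda_r>0}\prod_j(1+\lambda_j^2)^{-g}\prod_j\lambda_j^{2b+1}\!\!\prod_{1\le j<k\le r}\!\!(\lambda_j^2-\lambda_k^2)^a\,d\lambda.
$$
Substituting $t_j=\lambda_j^2/(1+\lambda_j^2)\in(0,1)$, all the powers of $(1-t_j)$ produced by the change of variables cancel exactly \emph{because} $g=(r-1)a+b+2$, landing precisely on the representation of $F(0)$ given by the second line of \eqref{selberg}. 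Collecting constants then yields $\Vol\left(\C^{n+1},\w^*_{\W,\mu}\right)=\frac{\pi^{n+1}\mu^n}{(n+1)!}F(0)\int_{\mathcal F}\Theta$.

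I expect the genuine obstacle to be the Monge--Ampère identity $\det\!\left(\partial_j\bar\partial_k\log\tilde N\right)=\tilde N^{-g}$ together with pinning its constant to $1$, since the rest is routine once $\det H$ is known as a function of $\tilde N$ alone. The exact cancellation of the $(1-t_j)$ powers under the substitution — forced by the precise value of the genus — is the sharp internal consistency check that the whole computation must pass, and it is also what converts the unbounded Selberg integral into the bounded one defining $F(0)$.
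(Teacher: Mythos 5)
Your proposal is correct and follows essentially the same route as the paper's proof: compute the Monge--Amp\`ere determinant of $\varphi^*_{\Omega,\mu}$ as $\mu^n\tilde N^{\mu(n+1)-\gamma}/(\tilde N^\mu+|w|^2)^{n+2}$ using the dual \K--Einstein identity for $\w^*_{hyp}$, integrate out $w$ to get $\frac{\pi\mu^n}{n+1}\tilde N^{-\gamma}$, and reduce the remaining integral to $F(0)$ via the spectral/polar coordinates and the substitution $t_j=\lambda_j^2/(1+\lambda_j^2)$ (the paper performs this in two steps, $\lambda_j^2=t_j$ then $t_j=s_j/(1-s_j)$, which compose to your single substitution). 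Your explicit Schur-complement evaluation of $\det H$ fills in what the paper calls ``a long but straightforward computation,'' and your value $g=(r-1)a+b+2$ of the genus is indeed the one that makes the powers of $(1-t_j)$ cancel exactly, as the paper's own final display requires.
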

\begin{proof}
Observe first that since (see e.g. \cite{koranyi2}):
$
 \det(\mu\omega^*_{hyp})=\mu^n(N^*)^{-\gamma},
$
a long but straightforward computation gives:
$$
\det(\omega^*_{\W,\mu})=\mu^n\frac{\left(N^*\right)^{\mu(n+1)-\gamma}}{\left(\left(N^*\right)^\mu+|w|^2\right)^{n+2}}.
$$

Thus, using the polar coordinates of HSSNT (see e.g. ), we get:
\begin{equation}
\begin{split}
\operatorname {Vol}&(\C ^{n+1},\omega^*_{\W,\mu})=\int_{\mathds C^{n+1}}\frac{(\omega^*_{\W,\mu})^{n+1}}{(n+1)!}
=\mu^n\pi\int_{\mathds C^n} \int_{0}^{+\infty}\frac{\left(N^*\right)^{\mu(n+1)-\gamma}}{\left(\left(N^*\right)^\mu+r_w\right)^{n+2}}dr_w\wedge\frac{\omega_0^n}{n!}\\
=&\frac{\pi\mu^n}{(n+1)!}\int_{\mathds C^n} \frac{\left(N^*\right)^{\mu(n+1)-\gamma}}{\left(N^*\right)^{\mu(n+1)}}\w_0^n
=\frac{\pi\mu^n}{(n+1)!}\int_{\mathds C^n}\left(N^*\right)^{-\gamma}{\w_0^n}\\
=&\frac{\pi^{n+1}\mu^n}{(n+1)!} \int_\mathcal{F} \Theta\mathop{\int\cdots\int}_{+\infty>\lmb_1 >\dots>\lmb_r>0}\ \prod_{j=1}^r\left(1+\lmb_j^2\right)^{-\gamma} \prod_{j=1}^r \lmb_j^{2b+1} \prod_{1\leq j<k\leq r} (\lmb_j^2 - \lmb_k^2)^a  \ d \lmb_1 \wedge \dots \wedge d \lmb_r\\
=& \frac{\pi^{n+1}\mu^n}{(n+1)!}F(0) \int_\mathcal{F} \Theta,
\end{split}
\end{equation}
where $\gamma=b+2+(r-1)\frac a2$ is the genus of $\Omega$, $F(s)$ is given in \eqref{francesca}, and last equality follows by:
\begin{equation}
\begin{split}
&\mathop{\int\cdots\int}_{+\infty>\lmb_1 >\dots>\lmb_r>0}\ \prod_{j=1}^r\left(1+\lmb_j^2\right)^{-\gamma} \prod_{j=1}^r \lmb_j^{2b+1} \prod_{1\leq j<k\leq r} (\lmb_j^2 - \lmb_k^2)^a  \ d \lmb_1 \wedge \dots \wedge d \lmb_r\\
=&\frac{1}{2^r}\mathop{\int\cdots\int}_{+\infty>\lmb_1 >\dots>\lmb_r>0}\ \prod_{j=1}^r\left(1+t_j\right)^{-\gamma} \prod_{j=1}^r t_j^{b} \prod_{1\leq j<k\leq r} (t_j - t_k)^a  \ d t_1 \wedge \dots \wedge d t_r\\
=&\frac{1}{2^r}\mathop{\int\cdots\int}_{1>\lmb_1 >\dots>\lmb_r>0}\ \prod_{j=1}^r\left(1-s_j\right)^{\gamma-2-b} \prod_{j=1}^r s_j^{b} \prod_{1\leq j<k\leq r} \left( \frac{s_j}{1-s_j}- \frac{s_k}{1-s_k}\right)^a \ d s_1 \wedge \dots \wedge d s_r\\
=&\frac{1}{2^n}\mathop{\int\cdots\int}_{1>\lmb_1 >\dots>\lmb_r>0}\ \prod_{j=1}^r\left(1-s_j\right)^{\gamma-2-b-(r-1)a} \prod_{j=1}^r s_j^{b} \prod_{1\leq j<k\leq r} ( s_j-s_k)^a \ d s_1 \wedge \dots \wedge d s_r\\
=&\frac{1}{2^r}\mathop{\int\cdots\int}_{1>\lmb_1 >\dots>\lmb_r>0}\ \prod_{j=1}^r s_j^{b} \prod_{1\leq j<k\leq r} ( s_j-s_k)^a \ d s_1 \wedge \dots \wedge d s_r=F(0),
\end{split}\nonumber
\end{equation}
where we performed in turn the change of variables $\lambda_j^2=t_j$ and $t_j=s_j/(1-s_j)$, and the last equality follows by \eqref{selberg}.
\end{proof}


\subsection{Holomorphic isometries between dual Cartan--Hartogs domains}\label{sectotdual}
Consider a totally geodesic complex immersion $f:\W' \f \W$ between HSSNCT. Identify  $\W'$ with its image $f\left(\W'\right)\subset\W$ and observe that $f$ trivially extends to an injective morphism $f:V' \f V$ of the associated HPJTS $V'$ and $V$ (see \cite[Prop. 2.2]{loiscala}). Hence, the map:
\begin{equation}\label{liftdual}
\wi f : V'\times \C \f V\times \C, \qquad\wi f (z,w) = \left(f(z),w\right),
\end{equation}
satisfies:
\begin{equation}\begin{split} \label{eqpullbw}
 \wi f ^* \omega^*_{\W,\mu}& = \frac{i}{2} \de\deb \log \left(N_{\Omega}^{\mu}(f(z),- \ov {f(z)}) + |w|^{2}\right) \\
 & = \frac{i}{2} \de\deb \log \left(N_{\Omega'}^{\mu}(z, - \ov  {z})+|w|^{2}\right) = {\w}^*_{\W',\mu}.
\end{split}\end{equation}
Let us identify $V\cong \C^n$ and $V'\cong \C ^m$, as in the beginning of this section, we just proved the following result:
\begin{prop}\label{propextdual}
Let $\W$ be an HSSNCT, then any totally geodesic complex immersion $f:\W' \f \W$ extends to the  \K\ embedding $\wi{f}: \left(\C^{m+1}, {\w}^*_{\W',\mu}\right) \f \left(\C^{n+1}, {\w}^*_{\W,\mu}\right)$ to the corresponding duals Cartan--Hartogs domains, given by \eqref{liftdual}.
\end{prop}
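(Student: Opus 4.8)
The plan is to verify the three defining properties of a \K\ embedding for the map $\wi f(z,w)=(f(z),w)$ of \eqref{liftdual}: that it is holomorphic, that it is an injective immersion, and that it pulls back the dual form $\w^*_{\W,\mu}$ to $\w^*_{\W',\mu}$. A convenient feature of the dual setting, in contrast with Proposition \ref{propext}, is that the source and target are the full spaces $\C^{m+1}$ and $\C^{n+1}$ rather than bounded domains, so $\wi f$ is automatically globally defined and no boundary compatibility has to be checked.

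First I would dispose of holomorphy and the immersion property. The map $f\!:\W'\f\W$ is a totally geodesic complex immersion, hence holomorphic and injective, and by \cite[Prop. 2.2]{loiscala} it extends to an injective complex-linear morphism $f\!:V'\f V$ of the associated HPJTS; adjoining the identity in the last coordinate preserves both holomorphy and injectivity, so $\wi f$ is a holomorphic immersion. Since Lemma \ref{lemswkahl} guarantees that $\w^*_{\W',\mu}$ and $\w^*_{\W,\mu}$ are genuine \K\ forms, it only remains to establish the isometry relation.

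The core of the matter is the identity
$$
N_\Omega^\mu(f(z),-\ov{f(z)})=N_{\Omega'}^\mu(z,-\ov z),
$$
which is precisely the middle equality invoked in the computation \eqref{eqpullbw}. The one step requiring genuine care --- what I expect to be the main obstacle --- is to justify the sign change $\ov z\mapsto-\ov z$ inside the generic norm. I would proceed from the fact, recorded in Section \ref{sectot}, that $f$ preserves the generic norm, so that $N_\Omega(f(z),\ov{f(z)})=N_{\Omega'}(z,\ov z)$. Since $N_\Omega$ is a polynomial that is separately holomorphic in $z$ and antiholomorphic in $\ov z$, and since $f$ is complex linear, this is in fact an identity of polynomials in which $z$ and $\ov z$ may be treated as independent variables; it therefore persists under the formal substitution $\ov z\mapsto-\ov z$, yielding exactly the displayed equality.

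With this identity in hand, the proof concludes via \eqref{eqpullbw}: pulling back $\w^*_{\W,\mu}=\frac i2\de\deb\log(N_\Omega^\mu(z,-\ov z)+|w|^2)$ under $\wi f$ converts $N_\Omega^\mu(z,-\ov z)$ into $N_\Omega^\mu(f(z),-\ov{f(z)})=N_{\Omega'}^\mu(z,-\ov z)$ while leaving $|w|^2$ fixed, so that $\wi f^*\w^*_{\W,\mu}=\w^*_{\W',\mu}$. Together with holomorphy and the immersion property, this exhibits $\wi f$ as the asserted \K\ embedding.
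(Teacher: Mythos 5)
Your proposal is correct and follows essentially the same route as the paper: lift $f$ by the identity in the $w$-coordinate and verify the pullback computation \eqref{eqpullbw}, the whole content being the identity $N_\Omega^\mu(f(z),-\ov{f(z)})=N_{\Omega'}^\mu(z,-\ov z)$. Your polarization argument for the sign change is a valid, slightly more explicit justification of what the paper gets directly from the fact that $f$ preserves the triple product and hence the generic norm $N(x,\ov y)$ as a polynomial in two independent variables.
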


As in Subsec. \ref{sectot}, the isotropy group $K= \operatorname{Aut}(V\cong\C^n, \left\{,,\right\})$ of $\operatorname{Aut}(\Omega)$, by \eqref{eqpullbw} induces a natural action by isometries of $K$ on $\left(\C^{n+1},\w^* _{\W,\mu}\right)$, given by:
\begin{equation}\label{eqautacdual}\begin{split} 
\tau \cdot \left(z,w\right) =  \left(\tau(z),w\right), \quad \tau\in\operatorname{Aut}(\Omega). 
\end{split}\end{equation}
Moreover as a consequence of Prop. \ref{propextdual} and of the Polydisc Theorem for HSSNCT (see \cite{helgason}), we can see a dual Cartan--Hartogs domain $\left(\C^{n+1},\w^* _{\W,\mu}\right)$ as a union of \K\ embedded {dual Hartogs--Polydisc} $M^*_{\Delta^r,\mu}= \left(\C^{r+1}, \w^*_{\Delta^r,\mu}\right)$ (see Ex. \ref{hartogspolydual}):
$$
\C^{n+1}= \cup_{\tau\in K}\, \tau \left(M^*_{\Delta^r,\mu}\right)
$$ 
where $r$ is the rank of $\W$  and $\Delta^r\subset \W$ is a $r$-dimensional complex polydisc totally geodesically embedded in $\Omega$.

\subsection{{Proof of Theorem \ref{thmmaindual}}}\label{darbouxsectiondual}
Let $M^*_{\Omega,\mu}=\left(\C^{n+1}, \w^*_{\W,\mu}\right)$ be an $n$-dimensional dual Cartan-Hartogs domain and $\left(\C^n , \left\{,,\right\}_\W\right)$ the HJPTS associated to $\W$. 
By Lemma \ref{lemswkahl}, $M^*_{\Omega,\mu}$ is a well-defined K\"ahler manifold. 
In order to prove the existence of global Darboux coordinates,
consider the map $\Phi_{\W,\mu}\!:\C^{n+1} \rightarrow \C^{n+1}$  given by:
\begin{equation}\label{eqPhi}
\Phi_{\W,\mu}(z,w)=\frac{1}{\sqrt{N^\mu_\Omega(z,- \ov z)+|w|^2}}\left(\sqrt{\mu N^\mu_\Omega(z,- \ov z)}\, \operatorname{B}_\W(z,-\ov z)^{-\frac{1}{4}}z, w\right)
\end{equation}
where $\operatorname{B}_\W$ and $N_\W$ are respectively the Bergman operator and the generic norm associated to $ \left\{,,\right\}_\W$. 
We show that $\Phi_{\W,\mu}$ satisfies:
\begin{enumerate}[{\rm (A$'$)}]

\item 
$
\Phi^*_{\W,\mu}\w_0 = \sw_{\W,\mu};
$

\item $\Phi_{\W,\mu}$ is a diffeomorphism with its image $\operatorname {Im} \left(\Phi_{\W,\mu}\right) $. 
\end{enumerate}

As in the proof of Theorem \ref{thmmain}, we start with the following two lemmata, where to shorten the notation we set $\Ns(z,\ov  z):=N^\mu_{\Omega}(z,-\ov  z)$.

\begin{lemma}\label{ddarbouxh}
Let  $f_\Omega\!:M_{\Omega,\mu}\rightarrow \mathds C^{d+1}$ be a smooth map of the form
$$
f_\Omega(z_1,\dots, z_n,w):=\frac{1}{\sqrt{\Ns(z,\ov z)+|w|^2}}(h_1(z),\dots, h_n(z),w)
$$
where $h:=(h_1,\dots, h_n)$ satisfies:
\begin{equation}\label{dcondition1}
 \partial\bar\partial \Ns(z,\ov z)=\sum_{j=1}^ndh_j(z)\wedge d\ov {h_j(z)},
\end{equation}
and 
 \begin{equation}\label{dcondition2}
\sum_{j=1}^d(h_jd\bar h_j-\bar h_jdh_j)=(\bar \partial-\partial) \Ns.
\end{equation}
Then
\begin{equation}\label{deqfpullb}\begin{split} 
\omega^*_{\W,\mu}=\frac{i}{2}f_\Omega^*\left(\sum_{j=1}^n dz\wedge d\bar z +  dw\wedge\bar d w\right).
\end{split}\end{equation}
\end{lemma}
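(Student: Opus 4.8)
The plan is to replicate essentially verbatim the proof of Lemma~\ref{darbouxh}, the only changes being the sign adjustments forced by the dual potential $\varphi^*_{\W,\mu}=\log(\Ns+|w|^2)$, which carries a plus sign both in front of the logarithm and in front of $|w|^2$, as opposed to $-\log(N^\mu-|w|^2)$ in the direct case.

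First I would expand the dual form. Writing $P:=\Ns$ for brevity and using $\partial\bar\partial|w|^2=dw\wedge d\bar w$, $\partial|w|^2=\bar w\,dw$ and $\bar\partial|w|^2=w\,d\bar w$, the computation parallel to \eqref{omegamu1} gives
\begin{equation*}
\omega^*_{\W,\mu}=\frac{i}{2}\,\partial\frac{\bar\partial(P+|w|^2)}{P+|w|^2}=\frac{i}{2}\left[\frac{\partial\bar\partial P+dw\wedge d\bar w}{P+|w|^2}-\frac{(\partial P+\bar w\,dw)\wedge(\bar\partial P+w\,d\bar w)}{(P+|w|^2)^2}\right].
\end{equation*}
Next I would compute $\frac{i}{2}f_\Omega^*\big(\sum_{j=0}^n dz_j\wedge d\bar z_j\big)$ with the convention $h_0=z_0=w$, exactly as in \eqref{psistar1} but with $N^\mu-|w|^2$ replaced throughout by $P+|w|^2$; this yields
\begin{equation*}
\frac{i}{2}\!\left[\frac{dw\wedge d\bar w+\sum_{j=1}^n dh_j\wedge d\bar h_j}{P+|w|^2}+\frac{(w\,d\bar w-\bar w\,dw)+\sum_{j=1}^n(h_jd\bar h_j-\bar h_jdh_j)}{2(P+|w|^2)^2}\wedge d(P+|w|^2)\right].
\end{equation*}

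I would then insert the hypotheses \eqref{dcondition1}, giving $\sum_j dh_j\wedge d\bar h_j=\partial\bar\partial P$, and \eqref{dcondition2}, giving $\sum_j(h_jd\bar h_j-\bar h_jdh_j)=(\bar\partial-\partial)P$. The terms with denominator $P+|w|^2$ then match those of $\omega^*_{\W,\mu}$ and cancel in the difference. For the quadratic terms, the clean device is to set $\alpha:=\partial P+\bar w\,dw$ (a $(1,0)$-form) and $\beta:=\bar\partial P+w\,d\bar w$ (a $(0,1)$-form): the numerator coming from $\omega^*_{\W,\mu}$ is $\alpha\wedge\beta$, while the one from the pullback becomes $(\beta-\alpha)\wedge(\alpha+\beta)$, since $d(P+|w|^2)=\alpha+\beta$ and $(w\,d\bar w-\bar w\,dw)+(\bar\partial P-\partial P)=\beta-\alpha$. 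Because $\alpha\wedge\alpha=\beta\wedge\beta=0$, one has $(\beta-\alpha)\wedge(\alpha+\beta)=-2\,\alpha\wedge\beta$, and after accounting for the factor $\tfrac12$ the two contributions coincide, so $f_\Omega^*\omega_0-\omega^*_{\W,\mu}=0$, which is \eqref{deqfpullb}.

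The computation is entirely routine once the arithmetic of signs is arranged correctly, and that bookkeeping is the one place I would be careful: the flip from $\partial-\bar\partial$ in \eqref{condition2} to $\bar\partial-\partial$ in \eqref{dcondition2}, combined with the two sign changes in the dual potential, must be exactly what turns the cross-terms into a cancellation rather than a reinforcement. The $\alpha,\beta$ type-decomposition makes this transparent and avoids expanding the individual $(2,0)$, $(1,1)$ and $(0,2)$ pieces separately.
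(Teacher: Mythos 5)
Your proof is correct and follows exactly the route the paper intends: the paper's own proof of this lemma is literally the single sentence ``The proof is totally similar to that of Lemma~\ref{darbouxh}'', and you have carried out precisely that parallel computation with the sign bookkeeping done correctly (the $\alpha,\beta$ device giving $(\beta-\alpha)\wedge(\alpha+\beta)=-2\,\alpha\wedge\beta$ is a clean way to organize the cancellation that the paper does by hand in \eqref{compare}). No gaps.
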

\begin{proof}
The proof is totally similar to that of Lemma \ref{darbouxh}.
\end{proof}

\begin{lemma}\label{draffaello}
If $G\!:(\Omega, \omega_{hyp}) \rightarrow (\mathds C^n, w_0)$ is a holomorphic map satisfying $G^*\omega_0=\frac i2\partial\bar \partial \log \Ns$, and:
\begin{equation}\label{dla19}
\sum_{j=1}^n\left( G_jd\bar G_j-\bar G_jdG_j\right)=\bar \partial \log N_{\Omega^*}- \partial \log N_{\Omega^*},
\end{equation}
 then:
\begin{equation}\label{dcompare}
 \partial\bar\partial \Ns=\mu\sum_{j=1}^dd(N_{\Omega^*}^{\mu/2}G_j)\wedge d (N_{\Omega^*}^{\mu/2}\bar G_j),
\end{equation}
and
 \begin{equation}\label{dcondition2proven}
\mu N_{\Omega^*}^{\mu/2}\sum_{j=1}^d\left(G_j d (N_{\Omega^*}^{\mu/2}\bar G_j)-\bar G_j d (N_{\Omega^*}^{\mu/2} G_j)\right)=(\bar\partial- \partial) \Ns.
\end{equation}
\end{lemma}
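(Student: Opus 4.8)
The plan is to mirror the proof of Lemma \ref{raffaello} exactly, exploiting the fact that the dual construction replaces $N_\Omega(z,\ov z)$ by $\Ns = N^\mu_\Omega(z,-\ov z)$ and flips the sign of the $(\partial-\bar\partial)$ terms. Concretely, I would first record the hypothesis $G^*\omega_0 = \frac{i}{2}\partial\bar\partial \log \Ns$ in the differential form $\mu\sum_{j=1}^d dG_j\wedge d\bar G_j = \partial\bar\partial\log N^\mu_{\Omega^*}$, noting the crucial sign difference from Lemma \ref{raffaello}: here the Hessian of $\log\Ns$ is positive (reflecting that the dual potential is plurisubharmonic, as established in Lemma \ref{lemswkahl}), so the pullback equation carries a $+$ rather than a $-$.

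To prove \eqref{dcompare} I would expand the right-hand side just as before:
\begin{equation*}
\begin{split}
\mu\sum_{j=1}^d d(N_{\Omega^*}^{\mu/2}G_j)\wedge d(N_{\Omega^*}^{\mu/2}\bar G_j)
=&\ \mu N_{\Omega^*}^{\mu/2}\sum_{j=1}^d\left[dN_{\Omega^*}^{\mu/2}\wedge G_jd\bar G_j+\bar G_jdG_j\wedge dN_{\Omega^*}^{\mu/2}\right]\\
&+N^\mu_{\Omega^*}\,\partial\bar\partial\log N^\mu_{\Omega^*}.
\end{split}
\end{equation*}
Using $N^\mu_{\Omega^*}\partial\bar\partial\log N^\mu_{\Omega^*} = \partial\bar\partial N^\mu_{\Omega^*} - \frac{\partial N^\mu_{\Omega^*}\wedge\bar\partial N^\mu_{\Omega^*}}{N^\mu_{\Omega^*}}$ and the identity $\partial N^\mu_{\Omega^*}=2N^{\mu/2}_{\Omega^*}\partial N^{\mu/2}_{\Omega^*}$, the desired equality \eqref{dcompare} reduces to the first-order identity $dN_{\Omega^*}^{\mu/2}\wedge\sum_{j=1}^d(G_jd\bar G_j-\bar G_jdG_j) = -dN_{\Omega^*}^{\mu/2}\wedge(\partial\log N_{\Omega^*}-\bar\partial\log N_{\Omega^*})$, which is exactly the wedge of $dN^{\mu/2}_{\Omega^*}$ against hypothesis \eqref{dla19}. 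The proof of \eqref{dcondition2proven} is then the same direct telescoping computation as in Lemma \ref{raffaello}, using \eqref{dla19} to replace $\sum(G_jd\bar G_j-\bar G_jdG_j)$ by $\bar\partial\log N_{\Omega^*}-\partial\log N_{\Omega^*}$ and then $\mu N^{\mu-1}_{\Omega^*}(\bar\partial N_{\Omega^*}-\partial N_{\Omega^*})=(\bar\partial-\partial)N^\mu_{\Omega^*}$.

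The only genuine subtlety, and the place to be careful, is bookkeeping the signs: every occurrence of $(\partial-\bar\partial)$ in Lemma \ref{raffaello} becomes $(\bar\partial-\partial)$ here, and the overall sign of the Hessian flips because $\Ns$ is a potential for a genuine metric on $\C^{n+1}$ rather than the hyperbolic one. I would therefore not reprove the identities from scratch but instead indicate that the computation is formally identical to that of Lemma \ref{raffaello} after the substitutions $N^\mu \rightsquigarrow \Ns$, $F\rightsquigarrow G$, and $(\partial-\bar\partial)\rightsquigarrow(\bar\partial-\partial)$, so that the two algebraic reductions go through verbatim with the signs tracked consistently. No new analytic input is needed; the content is entirely the same wedge-product algebra as before.
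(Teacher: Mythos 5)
Your proposal is correct and takes essentially the same approach as the paper, whose entire proof of this lemma is the single sentence that it is ``totally similar to that of Lemma \ref{raffaello}''; you have simply made that similarity explicit, and your sign bookkeeping (the pullback identity carrying a $+$, the reduction to $+4\,\partial N_{\Omega^*}^{\mu/2}\wedge\bar\partial N_{\Omega^*}^{\mu/2}$, and the final telescoping to $(\bar\partial-\partial)N_{\Omega^*}^{\mu}$) is accurate throughout.
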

\begin{proof}
The proof is totally similar to that of Lemma \ref{raffaello}.
\end{proof}

In \cite[Theorem 1.1]{loiscala}, A. Loi and A. Di Scala show that $G\!:\left(\C^n, \frac i2 \partial \bar \partial \log \Ns\right) \rightarrow (\mathds C^n, w_0)$ defined by:
$$
G(z)=\operatorname{B}_\Omega(z,- \ov z)^{-\frac{1}{4}} z, 
$$
is a global symplectomorphism. Thus, by Lemma \ref{ddarbouxh} and Lemma \ref{draffaello}, in order to prove (A$'$) we need only to check that such $G$ satisfies \eqref{dla19}. Also here the proof is very similar to that of (A), once substituting $F$ with $G$ and $\operatorname{D}(z,z)$ with $-\operatorname{D}(z,-z)$.

Following the same approach as in the proof of (B), we prove first that property (B$'$) holds for the Hartogs--polydisc case.
\begin{lemma}\label{biprimo}
Property {\rm(B$'$)} holds for the dual Hartogs--polydisc.
\end{lemma}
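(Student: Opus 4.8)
The plan is to mirror the proof of Lemma~\ref{secHP} step by step, replacing the bounded generic norm $\prod_{j}(1-|z_j|^2)$ by its dual $\prod_{j}(1+|z_j|^2)$ throughout. Reading off formula~\eqref{eqPhi} and recalling that for the polydisc $N_{\Delta^n}(z,-\ov z)=\prod_{j=1}^n(1+|z_j|^2)$ while $\operatorname{B}_{\Delta^n}(z,-\ov z)$ is diagonal with eigenvalues $(1+|z_j|^2)^2$, the map in question takes the explicit form
\begin{equation*}
\Phi_{\Delta^n,\mu}(z,w)=\frac{1}{\sqrt{\prod_{j=1}^n(1+|z_j|^2)^\mu+|w|^2}}\left(\sqrt{\mu\prod_{j=1}^n(1+|z_j|^2)^\mu}\left(\frac{z_1}{\sqrt{1+|z_1|^2}},\dots,\frac{z_n}{\sqrt{1+|z_n|^2}}\right),w\right),
\end{equation*}
which is smooth on all of $\C^{n+1}$ since its denominator never vanishes.

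Next I would pass to the torus-invariant variables $x_j=|z_j|^2$ and $y=|w|^2$ and introduce the reduced potential $\widetilde\varphi^*_{\Delta^n,\mu}(x,y):=\log\big(\prod_{j=1}^n(1+x_j)^\mu+y\big)$, so that $\varphi^*_{\Delta^n,\mu}(z,w)=\widetilde\varphi^*_{\Delta^n,\mu}(|z_1|^2,\dots,|z_n|^2,|w|^2)$ on $[0,+\infty)^n\times[0,+\infty)$. By \cite[Th.~1.1]{loizuddas}, the strict monotonicity of $\widetilde\varphi^*_{\Delta^n,\mu}$ in each of its variables already guarantees that $\Phi_{\Delta^n,\mu}$ is injective and an immersion, hence a diffeomorphism onto its image; this is precisely statement (B$'$). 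The two required inequalities are immediate:
\begin{equation*}
\frac{\partial\widetilde\varphi^*_{\Delta^n,\mu}}{\partial x_k}=\frac{\mu\prod_{h=1}^n(1+x_h)^\mu}{(1+x_k)\big(\prod_{h=1}^n(1+x_h)^\mu+y\big)}>0,\qquad\frac{\partial\widetilde\varphi^*_{\Delta^n,\mu}}{\partial y}=\frac{1}{\prod_{h=1}^n(1+x_h)^\mu+y}>0.
\end{equation*}

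The main point---and the only genuine difference from Lemma~\ref{secHP}---is that the source is now the whole of $\C^{n+1}$ rather than a bounded Hartogs domain, so one should only expect a diffeomorphism onto the image and not a global symplectomorphism, which is exactly what (B$'$) asserts. Indeed the third hypothesis appearing in Lemma~\ref{secHP}, namely that $\sum_{j}\frac{\partial\widetilde\varphi^*_{\Delta^n,\mu}}{\partial x_j}x_j+\frac{\partial\widetilde\varphi^*_{\Delta^n,\mu}}{\partial y}y\to+\infty$ at the boundary (the condition that would upgrade the map to a surjection onto $\C^{n+1}$), genuinely fails here: writing $P=\prod_{h=1}^n(1+x_h)^\mu$, the quantity $\frac{1}{P+y}\big(\mu P\sum_{j}\frac{x_j}{1+x_j}+y\big)$ stays bounded and tends to $1$ as $y\to+\infty$. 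This is consistent with the image being proper, since the last component of $\Phi_{\Delta^n,\mu}$ always has modulus $|w|/\sqrt{\prod_{j}(1+|z_j|^2)^\mu+|w|^2}<1$; thus the loss of surjectivity is expected and harmless, and the two positivity conditions above suffice to conclude.
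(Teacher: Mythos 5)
Your proof is correct and follows essentially the same route as the paper: both apply \cite[Th.~1.1]{loizuddas} to the reduced potential $\widetilde\varphi^*_{\Delta^n,\mu}(x,y)=\log\bigl(\prod_{j}(1+x_j)^\mu+y\bigr)$ and verify the two positivity conditions, concluding that $\Phi_{\Delta^n,\mu}$ is a diffeomorphism onto its image. Your additional observation that the third (surjectivity) condition from Lemma~\ref{secHP} fails here is not in the paper but is consistent with, and nicely explains, why (B$'$) only claims a diffeomorphism with the image.
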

\begin{proof}
We apply \cite[Th. 1.1]{loizuddas}. Following the notation of  Example \ref{hartogspolydual}, we can write the K\"ahler potential $\varphi^*_{\Delta^n,\mu}(z,w)$ for the dual Hartogs--polydisc $M^*_{\Delta^n,\mu}$ as $\varphi^*_{\Delta^n,\mu}(z,w) = \wi\varphi^*_{\Delta^n,\mu}(|z_1|^2,\dots,|z_{n}|^2,|y|^2)$, where
$\wi\varphi^*_{\Delta^n,\mu}:\C^ {n+1} \f \R$ is given by:
\begin{equation}\label{eqphitd}\begin{split} 
 \wi\varphi^*_{\Delta^n,\mu}(x_1,\dots,x_{n},y):=\log \left(\prod_{j=1}^{n}\left(1+x_j\right)^{\mu}+y\right).
\end{split}\end{equation}
Then by \cite[Th. 1.1]{loizuddas}, the map:
{\small\begin{equation}\label{eqPhi0}\small 
\Phi_{\Delta^n,\mu}(z,w)=\frac{1}{\sqrt{  \prod_{j=1}^n(1+|z_j|^2)^\mu+|w|^2}}\left(\sqrt{\mu\prod_{j=1}^n(1+|z_j|^2)^{\mu}} \left(\frac{z_1}{\sqrt{1+|z_1|^2}},\dots,\frac{z_n}{\sqrt{1+|z_2|^2}}\right),{ w}\right),
\end{equation}}
is a diffeomorphism with its image if $\frac{\partial \wi\varphi_{(\Delta^n)^*,\mu}}{\partial x_{k}}>0$, $\frac{\partial \wi\varphi_{(\Delta^n)^*,\mu}}{\partial y}>0$. The two conditions are easily checked:
$$
\frac{\partial \wi \varphi_{(\Delta^n)^*,\mu}}{\partial x_j} =\frac{\mu\prod_{j=1}^n(1+x_j)^{\mu}}{(1+x_j)\left( \prod_{j=1}^n(1+x_j)^\mu+y\right)}>0,
$$
$$
\frac{\partial \wi \varphi_{(\Delta^n)^*,\mu}}{\partial y} =\frac{1}{ \prod_{j=1}^n(1+x_j)^\mu+y}>0,
$$
and property (B$'$) is verified for $\W= \Delta^n$.
\end{proof}
Proceeding now as in the proof of (B), the spectral decomposition of $\Phi_{\W,\mu}$ reads:
\begin{equation}\begin{split} \label{eqPhij}
 \Phi_{\W,\mu}(z,w)=\frac{1}{\sqrt{\prod_{j=1}^r \left(1+\lmb^2\right)^\mu+|w|^2}}\left(\sqrt{\mu \prod_{j=1}^r \left(1+\lmb^2\right)^\mu}\sum_{j=1}^{r} \frac{\lambda_{j}}{\left(1+\lambda_{j}^{2}\right)^{1 / 2}} c_{j}, w\right).
\end{split}\end{equation}
Comparing  \eqref{eqPhij} with \eqref{eqPhi0} and using Lemma \ref{biprimo}, we deduce that also $ \Phi_{\W,\mu}$ is a diffeomorphism (we apply \cite[Section 1.6]{dilr}), concluding the proof.


\begin{remark}\label{rmkphi}\rm
The map $\Phi_{\Omega,\mu}$ enjoys the same properties as $\Psi_{\Omega,\mu}$. In particular, it is hereditary in the sense that
for any bounded symmetric domain $\Omega'\subset \C^m$  complex and totally geodesic embedded  $\Omega' \stackrel{f}{\hookrightarrow} \Omega$, such that $f(0)=0$, one has:
 $$
 \Phi_{\Omega',\mu}(z,w)={\Phi_{\Omega,\mu}}\left(\wi f(z,w)\right),
 $$
where $\tilde{f}: M^*_{\Omega^{\prime}, \mu} \rightarrow M^*_{\Omega, \mu}$ is the \K\ embedding given in \eqref{liftdual}. This can be proven as in Remark \ref{heredrem}, using Prop. \ref{propextdual} instead of Prop. \ref{propext}. Further, $\Phi_{\W,\mu}$ commutes with the holomorphic isometric action \eqref{eqautacdual} of the isotropy group $K\subset \operatorname{Aut}(\Omega)$ at the origin, i.e. $\Phi_{\W,\mu} \circ \tau=\tau \circ \Phi_{\W,\mu}$,
as it follows by:
\begin{equation*}\begin{split} 
 \Phi_{\Omega,\mu} \circ \tau \left(z,w\right)&=\frac{1}{\sqrt{N^\mu_\Omega(\tau(z),-\ov {\tau(z)})+|w|^2}}\left(\sqrt{\mu N^\mu_\Omega(\tau(z),-\ov {\tau(z)})}\,\operatorname{B}_\Omega(\tau(z),-\tau(z))^{-\frac{1}{4}}\tau(z), w\right)\\
 &= \frac{1}{\sqrt{N_{\Omega}^{\mu}(z, -\ov {z})+|w|^{2}}}\left(\sqrt{\mu N_{\Omega}^{\mu}(z,-\ov  {z})} \,\operatorname{B}_\Omega(z, -z)^{-\frac{1}{4}}\tau( z), w\right)\\
 & = \tau \circ \Phi_{\Omega,\mu}  \left(z,w\right).
\end{split}\end{equation*}

\end{remark}

\section{Proof of Theorem \ref{propsimpc} and Theorem \ref{mainduality}}\label{capacitysection}

A map $c$ from the class  ${\mathcal C} (2n)$ of all symplectic manifolds of dimension $2n$ to $[0, +\infty]$
is called a \emph{symplectic capacity} if it satisfies the following conditions (see e.g. \cite{hoferzhenerbook}):

\begin{itemize}
\item[-] ({\em monotonicity}) if there exists a symplectic embedding $(M_1, \omega_1)\rightarrow (M_2, \omega_2)$ then 
$c(M_1, \omega_1)\leq c(M_2, \omega_2)$; 

\item[-] ({\em conformality}) $c(M, \lambda\omega)=|\lambda|c(M, \omega)$, for every $\lambda\in\R\setminus \{0\}$; 

\item[-]  ({\em nontriviality}) $c(B^{2n}(1), \omega_0)=\pi =c(Z^{2n}(1), \omega_0)$.

\end{itemize}

Here $B^{2n}(1)$ and $Z^{2n}(1)$ are the open unit  ball  and the  open cylinder in the standard $(\R^{2n}, \omega_0)$, i.e.:
\begin{equation*}
B^{2 n}(r)=\left\{(x, y) \in \mathbb{R}^{2 n}\, \midddd\, \sum_{j=1}^{n} x_{j}^{2}+y_{j}^{2}<r^{2}\right\},
\end{equation*}
\begin{equation}\label{Zcil}
Z^{2n}(r)=\{(x, y)\in\R^{2n} \ | \ x_1^2+y_1^2<r^2\}.
\end{equation}

We begin computing the symplectic capacity for $\left(M_{\Omega,\mu}, \w_0 \right) $. The proof relies on the facts, pointed out in \cite{lmz}, that the unitary ball $(B^{2n}(1), \omega_0)$ can be embedded into $(\Omega,\omega_0)$ and the domain $(\Omega,\omega_0)$ can be embedded into $(Z^{2n}(1),\omega_0)$.

\begin{proof}[Proof of Theorem \ref{propsimpc}] Let $\W$ be an HSSNCT and let $\left(\C^n, \left\{,,\right\}_\W\right)$ be its associated HJPTS.  
We first prove that the unitary ball $(B^{2n+2}(1), \omega_0)$ can be   embedded into   $\left( M_{\Omega,\mu}, \omega_0\right)$ if $\mu\in(0,1]$.
Let $z=\lambda_{1} c_1+\cdots+\lambda_{r}c_r$  be the spectral decomposition of a regular point $z\in \Omega\subset\C^{n}$, then the distance $\di_0 (0,v)$ 
from the origin $0\in {\mathcal M}$ to $z$
 is given by 
\begin{equation}\label{flat distance}
\di_0( 0,z) = (z\mid z)^{\frac{1}{2}} = \sqrt{\sum_{j=1}^r\lmb^2_j},
\end{equation}
(see \cite[Proposition VI.3.6]{roos} for a proof). Since:
$$
1  \leq \sum_{j=1}^r\lmb^2_j + \prod _{j=1}^r\left(1- \lmb^2_j\right),
$$
and:
$$
|w|^2<N\left(z,z\right)^\mu = \prod_{j=1}^{n}\left(1-\lmb^{2}\right)^{\mu},
$$
it follows that
$$
(B^{2n+2}(1), \omega_0)\cap \C^{n}_{reg}\times \C
\subset (M_{\Omega,\mu}, \omega_0)\cap \C^{n}_{reg} \times \C, \qquad \mu\in \left(0,1\right].
$$
Since  the set of regular points $\C^{n}_{reg}$ of $\C^n$ is dense (\cite[Proposition IV.3.1]{roos}) and $\Omega=\{z  \mid  \nax{z}<1\}$ (see \cite[Corollary 3.15]{loos}) we get:  
\begin{equation}\label{ballintoomega}
\left(B^{2n+2}(1), \omega_0\right)\subset \left(M_{\Omega,\mu}, \omega_0\right), \qquad \mu\in \left(0,1\right],
\end{equation}
as wished.

Let now $Z^{2n}(1)=\{(x, y)\ | \ x_1^2+y_1^2<1\}$ be the unitary cylinder in $\R^{2n}$. In \cite[Section 5]{lmz} it is proved that the domain $\left( \Omega, \omega_0\right)$ can be embedded into  $(Z^{2n}(1), \omega_0)$. It follows immediately that $\left(M_{\Omega,\mu}, \omega_0\right)$ can be embedded into  $(Z^{2n+2}(1), \omega_0)$ for every $\mu>0$.

Thus the first equality follows by the monotonicity and by the nontriviality of a symplectic capacity. 

Let us now compute the symplectic capacity of $\left(M^*_{\Omega,\mu},\w^*_{\W,\mu}\right)$. By Theorem \ref{thmmaindual} it follows:
$$
c\left(\C^{n+1},\w^*_{\W,\mu}\right)=c\left( \operatorname {Im}\left( \Phi_{\W,\mu}\right),\w_0\right).
$$
Therefore, it is enough to show that:
\begin{equation}\label{dadim}
\left\{
\begin{array}{l}
B^{2n+2}(\mu)\subset \operatorname {Im}\left( \Phi_{\W,\mu}\right) \subset Z^{2n+2} (\mu) \ \text{ if }\ \mu < 1 \\ 
B^{2n+2}(1)\subset \operatorname {Im}\left( \Phi_{\W,\mu}\right) \subset Z^{2n+2} (1) \ \text{ if }\ \mu \geq 1
\end{array}.
\right.
\end{equation}

Consider the expression of the symplectomorphism $\Phi_{\W,\mu}$ given in \eqref{eqPhij} in terms of spectral decomposition: 
$$
\Phi_{\W,\mu}(z,w) = \left(\sum_{j=1}^{r} \xi_j c_j, \, \xi_0 w \right),
$$
for
$$
\xi_j=\sqrt{\frac{{\mu \prod_{j=1}^r \left(1+\lmb_j^2\right)^\mu}}{{\prod_{j=1}^r \left(1+\lmb_j^2\right)^\mu+|w|^2}}}
 \frac{\lambda_{j}}{\sqrt{1+\lambda_{j}^{2}}}, \quad j=1,\dots,r,
$$
and
$$
\xi_0= \frac{1}{\sqrt{\prod_{j=1}^r \left(1+\lmb_j^2\right)^\mu+|w|^2}},
$$
where $z=\sum _{j=1}^r\lmb_j c_j$ is the spectral decomposition of $z\in \C^n$. Notice that:
$$
\xi_j^2 < \mu \quad j=1,\dots,n,\qquad \xi_0^2\left|w\right|^2< 1.
$$
Thus:
$$
\left\{
\begin{array}{l}
\operatorname {Im}\left( \Phi_{\W,\mu}\right) \subset Z^{2n+2} (\mu) \text{ if } \mu < 1 \\ 
\operatorname {Im}\left( \Phi_{\W,\mu}\right) \subset Z^{2n+2} (1) \text{ if } \mu \geq 1
\end{array}.
\right.
$$

It remains to show that: 
$$
\left\{
\begin{array}{l}
B^{2n+2}(\mu)\subset \operatorname {Im}\left( \Phi_{\W,\mu}\right) \ \text{ if }\ \mu < 1 \\ 
B^{2n+2}(1)\subset \operatorname {Im}\left( \Phi_{\W,\mu}\right)\ \text{ if }\ \mu \geq 1
\end{array}.
\right.
$$
Notice that  $\xi_0^2\left|w\right|^2$ can assume every value in the interval $\left[0,1\right)$. Assume that $\delta$ and $c$ are real positive constant such that
$$
\xi_0^2\left|w\right|^2=\delta^2\leq c^2<\min\left\{1,\mu\right\}.
$$
In order to prove that the sphere $S^{2n+2}(c)$ of $\C^{n+1}$ of radius $c$, is contained in $\operatorname {Im}\left( \Phi_{\W,\mu}\right)$, we need to show that the following system:
\begin{equation}\label{eqsys}
 \left\{
\begin{array}{l}
\xi_0^2\left|w\right|^2=\delta^2,  \\ 
\xi_j^2=x_j^2, \text{ for } j=1,\dots,n, 
\end{array}.
\right.
\end{equation}
has a solution in $\lmb_1,\dots,\lmb_r,|w|$, for any $\delta^2 < c^2$ and any $n$-uple $x_1,\dots,x_n\in \R$ such that $x_1^2+\dots+x_n^2=c^2-\delta^2$. We have:
$$
\xi_0^2\left|w\right|^2=\delta^2\  \Leftrightarrow\  |w|^2= \prod_{j=1}^r \left(1+\lmb_j^2\right)^\mu \frac{\delta^2}{1-\delta^2}.
$$
Substituting the last term of the previous equality in $\xi_j^2=x_j^2$ we get:
$$
\mu(1-\delta^2)\frac{\lmb_j^2}{1-\lmb_j^2}=x_j^2.
$$
Notice that the left hand side of the previous equation assume any value in the interval $\left[0,\mu(1-\delta^2)\right)$. Hence if $c^2-\delta^2< \mu(1-\delta^2)$, the system \eqref{eqsys} has a solution. By hypothesis $c^2<\min\left\{1,\mu\right\}$, hence if $\mu<1$ we have:
$$
c^2-\delta^2\leq \mu-\delta^2 < \mu(1-\delta^2),
$$
while if $\mu\geq 1$:
$$
c^2-\delta^2< 1-\delta^2 \leq \mu(1-\delta^2).
$$
Thus we get \eqref{dadim} and
conclusions follow by the monotonicity, conformality and nontriviality of a symplectic capacity.
\end{proof}


Recall that given a HSSNT $\Omega$, with associated Bergman operator $\operatorname{B}_\Omega$, the map $\Xi_\W\!:\Omega\rightarrow \mathds C^n$, 
$$
\Xi_\W(z)= \operatorname{B}_\Omega(z,z)^{-\frac14}z,
$$
satisfies the analogous of properties as the map $\Psi_{\Omega,\mu}$ of Theorem \ref{thmmain} (see remarks \ref{heredrem} and \ref{autrem}) and in addition it is a symplectic duality between $\left(\W, \w_{hyp}\right)$ and its dual  $\left(\C^n, \w^*_{hyp}\right)$ (see \cite{loiscala}). Notice that, according with the definition of symplectic dual given in the introduction and \eqref{eqwhyp}, we have (see also \cite[(13)]{loiscala}):
\begin{equation}\label{eqhypd}\begin{split} 
\w^*_{hyp} = \frac{i}{2}\de\deb \log N_\Omega(z, - \bar{z}).
\end{split}\end{equation}

%
%

We are now in the position of proving Theorem \ref{mainduality}:
\begin{proof}[{Proof of Theorem \ref{mainduality}}]

Assume $\W=\C {\rm H}^n$ and $\mu=1$, it is immediate to check that $M_{\C {\rm H}^n,1}=\C {\rm H}^{n+1}$. Recall that the generic norm and the Bergmann operator for $\C {\rm H}^n$ are given by:
$$
N_{\C {\rm H}^k}(z,\ov z) = 1 - \left|z\right|^2\quad \text{ and }\quad \operatorname{B}_{\C {\rm H}^k}(z, \bar{z})^{-\frac{1}{4}} z = \frac{z}{\sqrt{1-\left|z\right|^2}},
$$ 
where $\left|z\right|^2= \sum_{j=1}^k |z_j|^2$. Substituting the previous expressions in \eqref{eqdualkf}, \eqref{eqhypd} and \eqref{eqPsi}, we see that $\left(\C^{n+1}, \w^*_{\C {\rm H}^n,1}\right)=\left(\C^{n+1}, \w^*_{hyp}\right)$ and that:
$$
\Psi_{\C {\rm H}^n,1}=\Xi_\W,
$$
we conclude by \cite[Theorem 1.1]{loiscala} that $\Psi_{\C H^n,1}$ is a symplectic duality.  Moreover, by substituting the previous expressions in \eqref{eqPhi} and by  \cite[Theorem 1.1]{loiscala} we see that 
$$
\Phi_{\C {\rm H}^n,1}\left(z\right)= \operatorname{B}_\Omega(z,-z)^{-\frac14}z = \Xi^{-1}_\W(z).
$$

Viceversa, when $\mu<1$ a symplectic duality does not exist due to Theorem \ref{propsimpc} while, for $\mu\geq 1$, if a symplectic duality between $(\C^{n+1},\omega^*_{\W,\mu})$ and $(M_{\W,\mu},\omega_0)$ exists, then $\operatorname {Vol}(\C^{n+1},\omega^*_{\W,\mu})=\operatorname {Vol}(M_{\W,\mu},\omega_0)$. In this case, by Lemma \ref{volume} and \eqref{lemvolmw0} we have:
\begin{equation}\label{mudual}
\frac{F(\mu)}{F(0)}=\frac{\mu^n}{n+1}.
\end{equation}
Since:
$$
\frac{F(\mu)}{F(0)}=\prod_{j=1}^r\frac{\Gamma(\mu+1+(j-1)\frac a2)\Gamma(b+2+(r+j-2)\frac a2)}{\Gamma(1+(j-1)\frac a2)\Gamma(\mu+b+2+(r+j-2)\frac a2)},
$$
when $\Omega$ is the complex hyperbolic space, $\mu=1$ is a solution to \eqref{mudual}. In fact, in this case $r=1$ and $b=n-1$, thus: 
$$
\frac{F(\mu)}{F(0)}=\frac{\Gamma(\mu+1)\Gamma(n+1)}{\Gamma(\mu+n+1)}=\frac{n!}{(\mu+n)\cdots(\mu+1)},
$$
is equal to $\mu^n/(n+1)$ if and only if $\mu=1$. We claim that:
\begin{equation}\label{gennaio}
\frac{F(1)}{F(0)}=\prod_{j=1}^r\frac{(1+(j-1)\frac a2)}{(b+2+(r+j-2)\frac a2)}\leq \frac1{n+1},
\end{equation}
and the equality holds if and only if $r=1$.

If the claim holds, since the left hand side of \eqref{mudual} is strictly decreasing in $\mu$ while the right hand side is strictly increasing, we can see that the only positive solution to \eqref{mudual} must lie in $\left(0,1\right)$, concluding the proof. 

In order to prove the claim recall that $n=r(b+1+\frac{a}{2}(r-1))$.
Thus, substituting $r=1$ in \eqref{gennaio}, which happens iff $\W=\C {\rm H} ^n$, one readily gets that the equality is verified.
To conclude, let us proceed by induction on $r$. Assume $r\geq 2$, by the inductive hypothesis we have:
\begin{equation*}\begin{split} 
 \prod_{j=1}^r & \frac{(1+(j-1)\frac a2)}{(b+2+(r+j-2)\frac a2)} \leq \frac{(1+(r-1)\frac a2)}{(b+2+(2r-2)\frac a2)}\cdot \frac1{r(b+1+\frac{a}{2}(r-1))}\\
 \leq &   \frac{1}{(b+2+(2r-2)\frac a2)}\cdot \frac1{r} =  \frac{1}{
r(b+1+\frac{a}{2}(r-1)) + r(1+\frac{a}{2}(r-1)))}\\
< & \frac{1}{
r(b+1+\frac{a}{2}(r-1)) + 1}=\frac1{n+1},
\end{split}\end{equation*}
which proves \eqref{gennaio} (notice that the last inequality is strict since we assumed $r\geq 2$).
\end{proof}

\end{document}